\documentclass[twoside]{mfa}

\usepackage{amssymb,amsfonts}

\usepackage{graphicx}



\newcommand\N{\mathbb{N}}

\newcommand\R{\mathbb{R}}

\newcommand\sgn{\operatorname{sgn}}
\newcommand\NSV{{\mathcal{NSV}}}
\newcommand\SV{{\mathcal{SV}}}
\newcommand\NRV{{\mathcal{NRV}}}
\newcommand\RV{{\mathcal{RV}}}
\newcommand\DS{{\mathcal{DS}}}
\newcommand\IS{{\mathcal{IS}}}
\newcommand\mS{{\mathcal{S}}}
\newcommand\drm{{\mathrm{d}}}

\begin{document}

\title[Increasing solutions of half-linear delay DE\lowercase{s}]{On increasing solutions of half-linear delay differential equations}

\author[S. Matucci]{Serena Matucci}
\address{Department of Mathematics and Informatics ``U. Dini'',
	University of Florence, I-50139 Florence, Italy}
\email{serena.matucci@unifi.it}

\author[P. \v Reh\'ak]{Pavel \v Reh\'ak}
\address{Institute of Mathematics,
	FME,
	Brno University of Technology,
	Technick\'a 2, Brno, CZ-61669,
	Czech Republic}
\email{rehak.pavel@fme.vutbr.cz}

\thanks{The research of the second author has been supported by the grant GA20-11846S of the Czech Science Foundation.  The first author has been partially supported by Gnampa, National Institute for Advanced Mathematics (INdAM)}

\keywords{Half-linear differential equation, delayed differential equation,
	increasing solution, asymptotic behavior, regular variation}

\subjclass{34K25}{26A12}      
\begin{abstract}
We establish conditions guaranteeing that all eventually positive increasing
solutions of
a half-linear delay differential equation
are regularly varying and derive precise asymptotic formulae for them.
The results here presented are new also in the linear case and some of the observations are original
also for non-functional equations.
A substantial difference between the delayed and non-delayed case for eventually positive decreasing solutions is pointed out.

\end{abstract}

\newtheorem{theorem}{Theorem}[section]
\newtheorem{corollary}[theorem]{Corollary}
\newtheorem{lemma}[theorem]{Lemma}
\newtheorem{proposition}[theorem]{Proposition}

\theoremstyle{definition}
\newtheorem{definition}[theorem]{Definition}
\newtheorem{problem}[theorem]{Problem}
\newtheorem{example}[theorem]{Example}
\newtheorem{remark}[theorem]{Remark}

\numberwithin{equation}{section}

\maketitle

\medskip\centerline{paper submitted to \emph{Mathematics for applications}}

\section{Introduction}

Let us consider the equation
\begin{equation} \label{e}
(r(t)\Phi(y'))'=p(t)\Phi(y(\tau(t)))
\end{equation}
where $r,p$ are positive continuous functions on $[a,\infty)$ and $\Phi(u)=|u|^{\alpha-1}\sgn u$ with $\alpha>1$.
Throughout the paper we assume (unless not stated otherwise)
that $\tau$ satisfies the following conditions:
\begin{equation} \label{tau}
\tau\in C^1,\ \tau'>0,\ \tau(t)\le t,
\end{equation}
and
\begin{equation} \label{tau+}
\limsup_{t\to\infty}\frac{t}{\tau(t)}<\infty.
\end{equation}
The above conditions imply $\lim_{t\to\infty}\tau(t)=\infty$,
 and are fulfilled by standard examples of delay, e.g., $\tau(t)=t-\sigma$ with $\sigma>0$, or $\tau(t)=\lambda t$ with $\lambda\in(0,1)$.
Note that, in
contrast to the linear case where an equation with a general
delay can be transformed into an equation with a constant delay (see \cite{n:90}),
in the half-linear case it makes a good sense to consider a
general $\tau$. Solutions of \eqref{e} are understood in the classical sense, i.e., a solution $y$ of \eqref{e} is a $C^1$ function defined in an interval $I \subseteq [a, \infty)$,
such that $r\Phi(y')\in C^1(I)$ and $y$ satisfies \eqref{e} in $I$.

Our aim is to describe asymptotic behavior of all eventually positive increasing solutions of \eqref{e} via the theory of regular variation.
Regularly varying functions have been shown to be a very useful tool in studying asymptotic properties of various type of differential equations, see,
in particular, the monograph \cite{maric}. Linear and half-linear ordinary differential equations have been considered in the framework
of regular variation e.g. in \cite{book, geluk1990,grimm-hall,jaros-kusano,jar-kus-tan2003,pr-methods,pr-asymp,rt}.
Linear and half-linear functional differential equations have been studied in this framework in
	 \cite{grimm-hall,kusano-maric2006,kusano-maric2007georg,kusano-maric2007bull,manojl-tanig,pituk,tanigawa2008,tanigawa2012}.
The typical result in the works \cite{kusano-maric2006,kusano-maric2007georg,kusano-maric2007bull,manojl-tanig,tanigawa2008,tanigawa2012}
is the existence of a regularly varying solution, obtained by means of a topological approach, as, for example, the Schauder-Tychonoff fixed point theorem.
Our approach is different. We deal with the entire  class of eventually positive increasing solutions (which we assume to be non empty),
and we show that it is a subset of the class of regularly varying solutions. In addition, we derive precise asymptotic formula for each solution.
All our results are new also in the (functional) linear case, and
some of them are new in the non-functional (half-linear as well as linear) case. The comparison with existing results is discussed in remarks after the main results.

The paper is organized as follows. In the next section we give some basic information on equation \eqref{e} and recall selected
facts from the theory of regular variation,  which will include also De Haan class $\Pi$.
The main results are presented in Section~\ref{S: main}. In addition of showing regular variation of increasing solutions and deriving asymptotic formulae,
we briefly discuss also decreasing solutions; in particular, we show that
under reasonable assumptions they may exhibit a quite different behavior when compared with the solutions of non-delayed equations.
Directions for a possible future research are discusses as well. The last section contains all the proofs.

\section{Preliminaries}

As usual, the relation $f(t)\sim g(t)$ as $t\to\infty$ means $\lim_{t\to\infty}f(t)/g(t)=1$, the relation
$f(t)\asymp g(t)$ as $t\to\infty$ means that $\exists c_1,c_2\in(0,\infty)$ such that $c_1 g(t)\le f(t)\le c_2 g(t)$
for large $t$, and $f(t)=o(g(t))$ as $t\to\infty$ means that $\lim_{t\to\infty}f(t)/g(t)=0$.

As for nonoscillatory solutions of \eqref{e} (i.e., the solutions which are eventually of one sign), without loss of generality, we work
just with positive solutions, i.e., with the class
$$ \mathcal{S} = \{ y: y(t) \text{ is a positive solution of } \eqref{e} \text{ for large }t\}. $$
We wish to include our results into the framework of a standard classification of nonoscillatory solutions, which is given in
what follows. Because of the sign conditions on the coefficients,
all positive solutions of \eqref{e} are eventually monotone,
therefore any such a solution belongs to one of the following
disjoint classes:
\begin{equation*}
\begin{aligned}
\IS = \left\{ y \in \mathcal{S}: y'(t)>0 \text{ for large } t \right\},\ \
\DS = \left\{ y \in \mathcal{S}: y'(t)<0 \text{ for large } t \right\}.
\end{aligned}
\end{equation*}
As for the nonemptiness of these classes as well as of the subclasses defined below,
general conditions  are not known, as far as we know. Partial results related to the problem of the existence of nonoscillatory solutions were obtained in  \cite{kusano-lalli}.
Further, as a by-product of the investigation of half-linear delay differential equations in the framework of regular variation via a fixed point approach, some existence theorems under a setting which is close to our ones can be found e.g. in
 \cite{kusano-maric2007bull,tanigawa2008}.

We shall focus on the class $\IS$.
This class can be divided
into the mutually disjoint subclasses:
\begin{equation*}
\begin{aligned}
&\IS_{\infty} = \left\{ y \in \IS: \lim_{t \to \infty} y(t) = \infty \right\}, \ \ \IS_B = \left\{ y \in \IS: \lim_{t \to \infty} y(t) = b \in \mathbb R \right\}, \\
\end{aligned}
\end{equation*}
Define the so-called quasiderivative $y^{[1]}$ of
$y\in\mathcal{S}$ by $y^{[1]}=r\Phi(y')$. We introduce the
following convention
$$
\begin{aligned}
&\IS_{u,v}=\left\{ y \in \IS: \lim_{t \to \infty} y(t) =u,\ \ \lim_{t\to\infty}y^{[1]}(t)=v \right\}.\\
\end{aligned}
$$
where $u,v \in \{B, \infty\}$. If $u=B$ [$v=B$] we
mean that the value of the corresponding limit is a real nonzero number.
Using this convention we further distinguish the following types
of solutions which form subclasses in $\IS_B$ and
$\IS_\infty$ (we list only those ones that are not a-priori
excluded):
\begin{equation} \label{sbc}
\IS_{B,B},
\IS_{B,\infty}, \IS_{\infty,B}, \IS_{\infty,\infty}.
\end{equation}
In some places we need to emphasize that the classes and
subclasses of eventually positive  increasing
solutions are associated with a particular equation, say equation ($\ast$);
then we write
\begin{equation} \label{hvezda}
\text{$\IS^{(\ast)},\ \IS_B^{(\ast)}$, etc.}
\end{equation}
\medskip

In the second part of this section we recall basic information on the Karamata theory
of regularly varying functions and the de Haan class $\Pi$; for more information see the monographs
\cite{bgt,geluk,dehaan}.

A measurable function $f:[a,\infty)\to(0,\infty)$ is called \textit{regularly varying (at infinity) of index}
$\vartheta$, $\vartheta\in\R$, if
\begin{equation} \label{D:RV}
\lim_{t\to\infty}{f(\lambda t)}/{f(t)}=\lambda^\vartheta
\ \ \text{ for every $\lambda>0$;}
\end{equation}
we write $f\in\RV(\vartheta)$. If $\vartheta=0$, then we speak about \textit{slowly varying}
functions;
we write $f\in\SV$, thus $\SV=\RV(0)$. By $\RV$ we mean either abbreviation of ``regularly varying'' or
$\RV=\bigcup_{\vartheta\in\R}\RV(\vartheta)$.

A function $ f \in \RV(\vartheta) $ if and only if there
exists a function $ L \in \SV $ such that $ f(t) = t^{\vartheta}
L(t) $ for every $ t. $ The slowly varying component of $f\in
\RV(\vartheta)$ will be denoted by $L_f$, i.e.,
$L_f(t):={f(t)}/{t^{\vartheta}}.$

The following result (the so-called Uniform Convergence Theorem, see e.g. \cite{bgt})
is one of the most fundamental theorems in the theory. Many
important properties of $\RV$ functions follow from it.
\begin{proposition}	
If $f\in\RV(\vartheta)$, then the relation \eqref{D:RV} holds uniformly on each compact $\lambda$-set in
	$(0,\infty)$.
\end{proposition}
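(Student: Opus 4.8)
The plan is the classical two-step route: reduce to the slowly varying case, and then prove uniform convergence for slowly varying functions by a measure-theoretic (Steinhaus-type) argument. \emph{Reduction to $\SV$.} By the representation recalled above, $f(t)=t^\vartheta L_f(t)$ with $L_f\in\SV$, and $f(\lambda t)/f(t)=\lambda^\vartheta\,L_f(\lambda t)/L_f(t)$. On a fixed compact set $K\subset(0,\infty)$ the factor $\lambda\mapsto\lambda^\vartheta$ is a fixed bounded continuous function, so it suffices to show that $L_f(\lambda t)/L_f(t)\to1$ uniformly on $K$. Passing to logarithms and to an additive variable, set $b(x)=\log L_f(e^x)$; then $b$ is measurable and $b(x+u)-b(x)\to0$ (as $x\to\infty$) for each fixed $u\in\R$, and we must upgrade this to uniformity for $u$ in compact subsets of $\R$. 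Since every compact subset of $\R$ lies in some interval $[-m,m]$, which by reflection reduces to $[0,m]$, and an increment of $b$ over $[0,m]$ telescopes into $m$ increments over unit intervals, it is enough to handle $u\in[0,1]$.

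\emph{The core step.} Fix $\delta>0$ and let $E(x)=\{u\in[0,2]:|b(x+u)-b(x)|\le\delta\}$. Each $E(x)$ is measurable, and for every $u$ the indicator function of $[0,2]\setminus E(x)$ tends to $0$ while staying bounded by $1$, so dominated convergence on the finite-measure set $[0,2]$ gives $|[0,2]\setminus E(x)|\to0$; hence $|E(x)|>2-\eta$ for all $x\ge x_0(\delta)$, with $\eta$ as small as we please, say $\eta<1/2$. Now fix $u\in[0,1]$ and $x\ge x_0$. Intersected with $[u,2]$, both $E(x)$ and the translate $u+E(x+u)$ have measure exceeding $(2-u)-\eta$; since $2\big((2-u)-\eta\big)>2-u$, the set $E(x)\cap\big(u+E(x+u)\big)$ is nonempty. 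For $w$ in it, the bounds $|b(x+w)-b(x)|\le\delta$ and $|b(x+w)-b(x+u)|=|b((x+u)+(w-u))-b(x+u)|\le\delta$ combine, by the triangle inequality, into $|b(x+u)-b(x)|\le2\delta$. Thus $\sup_{u\in[0,1]}|b(x+u)-b(x)|\le2\delta$ for every $x\ge x_0$, which is exactly uniform convergence for slowly varying functions.

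\emph{Transfer back, and the main obstacle.} Undoing the substitutions yields $L_f(\lambda t)/L_f(t)\to1$ uniformly for $\lambda$ in every compact subset of $[1,\infty)$; replacing $\lambda$ by $1/\lambda$ and $t$ by $\lambda t$ extends this to compact subsets of $(0,1]$, hence to all compact subsets of $(0,\infty)$, and multiplying by $\lambda^\vartheta$ as in the first step completes the proof for $f$. The one genuinely non-routine point is the core step: converting the pointwise-in-$u$ convergence into a uniform one. It works because the ``good sets'' $E(x)$ exhaust $[0,2]$ in measure, so two almost-full subsets of a short interval are forced to overlap; the only analytic ingredients are dominated convergence (this is where measurability of $f$ enters) and the triangle inequality, and the reductions surrounding it are routine bookkeeping.
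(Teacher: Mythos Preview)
Your proof is correct; it is essentially the classical argument for the Uniform Convergence Theorem as given, e.g., in Bingham--Goldie--Teugels. Note, however, that the paper does not prove this proposition at all: it is stated as a known result with a citation to \cite{bgt}, so there is no ``paper's own proof'' to compare against. What you have supplied is precisely the standard proof the paper is invoking by reference --- reduction to $\SV$, passage to the additive form $b(x)=\log L_f(e^x)$, and the measure-overlap argument on the ``good sets'' $E(x)$.
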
	
Another important result in the theory of  $\RV$ functions is the  Representation Theorem (see e.g. \cite{bgt}).
\begin{proposition}	
$f\in \RV(\vartheta)$ if and only if
\begin{equation} \label{repr}
f(t)=\varphi(t)t^\vartheta\exp\left\{\int_{t_0}^t\frac{\psi(s)}{s}\, \drm s\right\},
\end{equation}
$t\ge t_0$, for some $t_0>0$, where $\varphi, \psi$ are measurable with $\lim_{t\to\infty}\varphi(t)=C\in(0,\infty)$ and $\lim_{t\to\infty}\psi(t)=0$.
A function $f\in \RV(\vartheta)$ can alternatively be represented as
$$ 
f(t)=\varphi(t)\exp\left\{\int_{t_0}^t\frac{\omega(s)}{s}\, \drm s\right\},
$$ 
$t\ge t_0$, for some $t_0>0$, where $\varphi, \omega$ are measurable
with $\lim_{t\to\infty}\varphi(t)=C\in(0,\infty)$ and
$\lim_{t\to\infty}\omega(t)=\vartheta$.
\end{proposition}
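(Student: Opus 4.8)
The plan is to prove the two implications separately; all the substance is in the ``only if'' direction, which I would base on the Uniform Convergence Theorem stated above.

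Sufficiency is a direct computation. Suppose $f(t)=\varphi(t)t^{\vartheta}\exp\{\int_{t_0}^{t}\psi(s)s^{-1}\,\drm s\}$ with $\varphi(t)\to C\in(0,\infty)$ and $\psi(t)\to0$ (one may take $t_0$ large enough that $\psi$ is bounded and locally integrable on $[t_0,\infty)$, so that $f$ is positive and measurable, the integral term being continuous in $t$). For fixed $\lambda>0$ write $f(\lambda t)/f(t)=\bigl(\varphi(\lambda t)/\varphi(t)\bigr)\lambda^{\vartheta}\exp\{\int_{t}^{\lambda t}\psi(s)s^{-1}\,\drm s\}$. The first factor tends to $C/C=1$; in the integral the substitution $s=ut$ gives $\int_{1}^{\lambda}\psi(ut)u^{-1}\,\drm u$, which tends to $0$ by dominated convergence since $\psi$ is bounded near infinity and $\psi(ut)\to0$ for each fixed $u$. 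Hence $f(\lambda t)/f(t)\to\lambda^{\vartheta}$, i.e.\ $f\in\RV(\vartheta)$. The same computation applies verbatim to the alternative representation after noting $t^{\vartheta}=t_0^{\vartheta}\exp\{\int_{t_0}^{t}\vartheta s^{-1}\,\drm s\}$.

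For necessity I first reduce to slow variation: writing $f=t^{\vartheta}L$ with $L=L_f\in\SV$, it suffices to produce the representation $L(t)=\varphi(t)\exp\{\int_{t_0}^{t}\psi(s)s^{-1}\,\drm s\}$ with $\varphi\to C\in(0,\infty)$ and $\psi\to0$; multiplying by $t^{\vartheta}$ then yields \eqref{repr}, and absorbing $t^{\vartheta}$ into the exponent with $\omega:=\vartheta+\psi$ yields the alternative form. To handle $L\in\SV$ I pass to the additive picture via $\eta(x):=\log L(e^{x})$. Here I use that a slowly varying function, and its reciprocal, are eventually bounded on compact sets, so $\eta$ is locally bounded, hence locally integrable, for large $x$ (a standard fact I would cite from \cite{bgt}). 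Slow variation says $\eta(x+v)-\eta(x)\to0$ as $x\to\infty$ for each $v$, and by the Uniform Convergence Theorem this holds uniformly for $v$ in compact sets. Now smooth by averaging: put $\tilde\eta(x):=\int_{x}^{x+1}\eta(u)\,\drm u$. Then $\tilde\eta(x)-\eta(x)=\int_{0}^{1}\bigl(\eta(x+v)-\eta(x)\bigr)\,\drm v\to0$, while $\tilde\eta$ is locally absolutely continuous with $\tilde\eta'(x)=\eta(x+1)-\eta(x)$ a.e. Returning to $t=e^{x}$, set $\psi(s):=\eta(\log s+1)-\eta(\log s)=\log\bigl(L(es)/L(s)\bigr)$, which is measurable and tends to $0$ (slow variation at $\lambda=e$), and $\varphi(t):=L(t)\exp\{-\int_{t_0}^{t}\psi(s)s^{-1}\,\drm s\}$. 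Substituting $s=e^{u}$ and using absolute continuity of $\tilde\eta$ gives $\int_{t_0}^{t}\psi(s)s^{-1}\,\drm s=\tilde\eta(\log t)-\tilde\eta(\log t_0)$, whence $\log\varphi(t)=\eta(\log t)-\tilde\eta(\log t)+\tilde\eta(\log t_0)\to\tilde\eta(\log t_0)$, so $\varphi$ has a finite positive limit; and $L(t)=\varphi(t)\exp\{\int_{t_0}^{t}\psi(s)s^{-1}\,\drm s\}$ by construction.

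The step I expect to be the crux is exactly the passage from the merely measurable, a priori unbounded $L$ to the smoothed, locally absolutely continuous $\tilde\eta$: this is where measurability is genuinely used, through (i) the eventual local boundedness of $L$ and $1/L$ and (ii) the Uniform Convergence Theorem, which is precisely what forces the averaging error $\tilde\eta-\eta$ to vanish at infinity. Once these two facts are in hand the rest is bookkeeping --- the reduction to $\SV$, the $\log$/$\exp$ dictionary, the change of variables $s=e^{u}$, and the trivial rearrangement producing the $\omega$-form.
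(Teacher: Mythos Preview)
The paper does not actually prove this proposition: it is stated as the classical Representation Theorem with a reference to \cite{bgt}, and no argument is given in the text. Your proof is correct and is essentially the standard Karamata argument one finds in \cite{bgt}: reduce to $\SV$ via $f=t^{\vartheta}L_f$, pass to the additive form $\eta(x)=\log L(e^x)$, use local boundedness of $L$ and $1/L$ together with the Uniform Convergence Theorem to justify the smoothing $\tilde\eta(x)=\int_x^{x+1}\eta(u)\,\drm u$, and read off $\psi$ and $\varphi$ from $\tilde\eta'$ and $\eta-\tilde\eta$. The sufficiency computation and the passage to the $\omega$-form are routine, as you note.
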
	

A regularly varying function $ f $ is said to be \textit{normalized regularly varying}, and  we write $ f \in \NRV(\vartheta), $ if $\varphi(t)\equiv C$ in
\eqref{repr} or in the alternative representation. If \eqref{repr} holds with $ \vartheta = 0$ and $\varphi(t)\equiv C$, we say that $ f $ is \textit{normalized slowly varying}, and we write $ f
\in \NSV. $ Clearly, if $ f $ is a $ C^1 $ function and $ \lim_{t \to \infty} {tf'(t)}/{f(t)} = \vartheta, $ then $ f \in \NRV (\vartheta). $
Conversely, if $f\in\NRV(\vartheta)\cap C^1$, then $\lim_{t\to\infty}tf'(t)/f(t)=\vartheta$.

The classes of regularly varying solutions of \eqref{e} are defined as follows:
$$
\begin{aligned}
&\mS_{\SV}=\mS\cap\SV, \quad
\mS_{\RV}(\vartheta)=\mS\cap\RV(\vartheta),\\
&\mS_{\NSV}=\mS\cap\NSV, \quad
\mS_{\NRV}(\vartheta)=\mS\cap\NRV(\vartheta).
\end{aligned}
$$

The  Karamata Integration Theorem (see e.g.
\cite{bgt,geluk}) plays a very important role in our theory. Its statement can be summarized as follows.

\begin{proposition} \label{T:karam}
	Let $ L \in \SV.$
	\begin{itemize} \itemsep=1mm
	\item[(i)]
	If $\vartheta<-1$, then
	$
	\int_t^{\infty} s^{\vartheta}L(s)\, \drm s  \sim  t^{\vartheta + 1} L(t)/(-\vartheta - 1)
	$
	as $t\to\infty$.
	
\item[(ii)]
	If $\vartheta>-1$, then
	$
	\int_a^t s^{\vartheta}L(s)\,\drm s  \sim
	t^{\vartheta + 1} L(t)/(\vartheta + 1)$
	as $t\to\infty$.
	
	\item[(iii)]
	If $ \int_a^{\infty} L(s)/s\, \drm s $ converges, then $ \widetilde L(t) = \int_t^{\infty} L(s)/s\, \drm s$ is a $\SV$ function; if $ \int_a^{\infty} L(s)/s\, \drm s $ diverges, then
	$ \widetilde L(t) = \int_a^t L(s)/s\, \drm s$ is a $\SV$ function; in both cases, $ {L(t)}/ {\widetilde L(t)} \to 0$ as $t\to\infty$.
\end{itemize}
\end{proposition}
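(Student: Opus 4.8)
\medskip

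\noindent\textbf{Proof proposal.} The plan is to derive everything from the two results stated just above: the Representation Theorem and the Uniform Convergence Theorem (UCT). For (i) and (ii) I would first reduce to the normalized factor of $L$. Using the representation, write $L(s)=\varphi(s)\ell(s)$ with $\varphi(s)\to C\in(0,\infty)$ and $\ell(s)=\exp\{\int_{t_0}^{s}\psi(u)/u\,\drm u\}$, $\psi(s)\to0$; thus $\ell\in\SV$, $\ell$ is locally absolutely continuous, and $\ell'(s)=(\psi(s)/s)\ell(s)$ a.e. Since $\varphi\to C$, on $[T,\infty)$ with $T$ large one can sandwich the relevant integral of $s^{\vartheta}L$ between $(C\mp\epsilon)$ times the corresponding integral of $s^{\vartheta}\ell$; the finite leftover pieces are $O(1)$, which is negligible next to $t^{\vartheta+1}\ell(t)$ — this tends to $+\infty$ if $\vartheta>-1$ and to $0$ if $\vartheta<-1$, since (an immediate consequence of the Representation Theorem) a slowly varying function grows and decays slower than any positive power of $t$. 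So it suffices to prove (i) and (ii) for $\ell$ in place of $L$ and then multiply by $C$, using $L(t)\sim C\ell(t)$.

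For $\ell$ I would integrate by parts (legitimate by local absolute continuity). When $\vartheta>-1$, from $s^{\vartheta+1}\ell'(s)=s^{\vartheta}\psi(s)\ell(s)$,
$$\int_T^{t}s^{\vartheta}\ell(s)\,\drm s=\frac{t^{\vartheta+1}\ell(t)}{\vartheta+1}+O(1)-\frac{1}{\vartheta+1}\int_T^{t}s^{\vartheta}\psi(s)\ell(s)\,\drm s;$$
when $\vartheta<-1$ the same identity holds on $[t,\infty)$ with no $O(1)$ term and with the boundary contribution at $\infty$ vanishing because $s^{\vartheta+1}\ell(s)\to0$, which is how the constant $1/(-\vartheta-1)$ arises. (In case (i) one first checks convergence: picking $\delta>0$ with $\vartheta+\delta<-1$, one has $s^{\vartheta}L(s)=s^{\vartheta+\delta}\bigl(s^{-\delta}L(s)\bigr)\le s^{\vartheta+\delta}$ eventually, since $s^{-\delta}L(s)\in\RV(-\delta)$ and so tends to $0$.) Now fix $\epsilon>0$ and choose $T$ so large that $|\psi(s)|<\epsilon$ for $s\ge T$; then the remainder integral is, in modulus, at most $\epsilon$ times the quantity on the left — a controlled fraction of the main term. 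Dividing the resulting two-sided estimate by $t^{\vartheta+1}\ell(t)$, letting $t\to\infty$ and then $\epsilon\to0$, forces the ratio of the left-hand side to $t^{\vartheta+1}\ell(t)$ to equal $1/(\vartheta+1)$, resp. $1/(-\vartheta-1)$. This self-referential step, available precisely because $\psi\to0$, is the engine of (i)--(ii).

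For (iii) I would handle the convergent and divergent cases together, proving first that $L(t)=o(\widetilde L(t))$ and deducing $\widetilde L\in\SV$ afterwards. For the decay, split $\widetilde L$ over a geometric progression of intervals: in the convergent case $\widetilde L(t)=\sum_{k\ge0}\int_{2^{k}t}^{2^{k+1}t}L(s)/s\,\drm s\ge\frac{\ln2}{2}\sum_{k\ge0}L(2^{k}t)$ for large $t$ (by the UCT, $L(s)\ge L(2^{k}t)/2$ on the $k$-th block), and in the divergent case the blocks $[t/2^{k+1},t/2^{k}]$, $k=0,\dots,n(t)$ with $n(t)\to\infty$, give $\widetilde L(t)\ge\frac{\ln2}{2}\sum_{k=0}^{n(t)}L(t/2^{k})$; since $L\in\SV$ makes each fixed ratio $L(2^{\pm k}t)/L(t)\to1$, the growing number of comparable summands forces $\widetilde L(t)/L(t)\to\infty$. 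Slow variation of $\widetilde L$ is then a one-liner: by the UCT, $|\widetilde L(\lambda t)-\widetilde L(t)|=\bigl|\int_{t}^{\lambda t}L(s)/s\,\drm s\bigr|\le 2|\ln\lambda|\,L(t)$ for large $t$, which is $o(\widetilde L(t))$, hence $\widetilde L(\lambda t)/\widetilde L(t)\to1$. I expect the only genuine obstacle in the whole proof to be exactly this: the crude UCT bound $L(s)\le 2L(t)$ on $[t,2t]$ instantly gives that $L(t)/\widetilde L(t)$ is \emph{bounded}, but upgrading ``bounded'' to ``$\to0$'' needs the finer fact that a slowly varying function is almost constant on the logarithmic scale, so that aggregating many logarithmically sized pieces strictly beats any single one. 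Everything else is routine bookkeeping with the Representation Theorem and the UCT.
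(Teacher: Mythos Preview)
The paper does not prove this proposition at all: it is stated as the classical Karamata Integration Theorem with a reference to \cite{bgt,geluk}, and no argument is given. So there is nothing in the paper to compare your proof against.

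Your proposal is correct and is essentially one of the standard textbook proofs (close to what one finds in the cited monographs). The reduction to the normalized factor $\ell$ via the Representation Theorem, followed by integration by parts and the self-improving estimate coming from $\psi\to0$, is exactly the classical route for (i)--(ii). One small expository wrinkle: you say the $O(1)$ leftover ``is negligible next to $t^{\vartheta+1}\ell(t)$ --- this tends to $+\infty$ if $\vartheta>-1$ and to $0$ if $\vartheta<-1$''; in the case $\vartheta<-1$ an $O(1)$ term would \emph{not} be negligible next to something tending to $0$, but this does no harm because in that case the integral is over $[t,\infty)$ and there is no leftover piece in the first place (as you note a few lines later). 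For (iii), your dyadic-block argument using the UCT to get $L(t)=o(\widetilde L(t))$ and then deducing slow variation from $|\widetilde L(\lambda t)-\widetilde L(t)|\le 2|\ln\lambda|\,L(t)=o(\widetilde L(t))$ is clean and correct; the key observation that any fixed finite number $K$ of blocks already contributes $\sim K$ times $L(t)$ is precisely what upgrades ``bounded'' to ``$\to0$''.
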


Here are further useful properties of $\RV$ functions.
	The proofs of (i)--(viii) are either easy or can be found in
	\cite{bgt,geluk}. The proof of (ix) can be found in the last section.
	
\begin{proposition}\label{P:RV}\
	\begin{itemize} \itemsep=1mm
		\item[(i)]
		If $f\in\RV(\vartheta)$, then $\ln f(t)/\ln t\to\vartheta$ as
		$t\to\infty$. It then clearly implies that
		$\lim_{t\to\infty}f(t)=0$ provided $\vartheta<0$, or
		$\lim_{t\to\infty}f(t)=\infty$ provided $\vartheta>0$.
		
		\item[(ii)]
		If $f\in\RV(\vartheta)$, then $f^\alpha\in\RV(\alpha\vartheta)$ for every $\alpha\in\R$.
		
		\item[(iii)]
		If $f_i\in\RV(\vartheta_i)$, $i=1,2$, $f_2(t)\to\infty$ as $t\to\infty$, then $f_1\circ f_2\in\RV(\vartheta_1\vartheta_2)$.
		
		\item[(iv)]
		If $f_i\in\RV(\vartheta_i)$, $i=1,2$, then $f_1+ f_2\in\RV(\max\{\vartheta_1, \vartheta_2\})$.
		
		\item[(v)]
		If $f_i\in\RV(\vartheta_i)$, $i=1,2$, then $f_1 f_2\in\RV(\vartheta_1+\vartheta_2)$.
		
		\item[(vi)]
		If $f_1,\dots,f_n\in\RV$, $n\in\N$, and $R(x_1,\dots,x_n)$ is a rational function with nonnegative
		coefficients, then $R(f_1,\dots,f_n)\in\RV$.
		
		\item[(vii)]
		If $L\in\SV$ and $\vartheta>0$, then $t^\vartheta L(t)\to\infty,$ $t^{-\vartheta}L(t)\to 0$
		as $t\to\infty$.
		
		\item[(viii)]
		If $f\in\RV(\vartheta)$, $\vartheta\ne 0$, then there exists $g\in
		C^1$ with $g(t)\sim f(t)$ as $t\to\infty$ and such that
		$tg'(t)/g(t)\to\vartheta$, whence $g\in\NRV(\vartheta)$. Moreover,
		$g$ can be taken such that $|g'|\in\NRV(\vartheta-1)$.
		
		\item[(ix)] 
		If $|f'|\in\RV(\vartheta)$, $\vartheta\in\R$, with $f'$ being eventually of one sign
		and $f(t)\to 0$ or $f(t)\to\infty$ as $t\to\infty$, then
		$f\in\NRV(\vartheta+1)$.
	\end{itemize}
\end{proposition}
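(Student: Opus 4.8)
Since the assertion concerns \emph{normalized} regular variation, $f$ is tacitly positive for large $t$; the plan is to reduce everything to showing that $\lim_{t\to\infty}tf'(t)/f(t)=\vartheta+1$, after which $f\in\NRV(\vartheta+1)$ follows from the elementary characterization recalled in Section~2 (or, if one prefers not to assume $f'$ continuous, from the alternative form of the Representation Theorem applied to $f(t)=f(t_1)\exp\{\int_{t_1}^{t}s^{-1}(sf'(s)/f(s))\,\drm s\}$, in which $\varphi\equiv f(t_1)$ is constant). To set up, let $\varepsilon\in\{-1,1\}$ be the eventual sign of $f'$, say for $t\ge t_1$, write $|f'(t)|=t^{\vartheta}L(t)$ with $L\in\SV$, and record the identity $f(t)=f(t_1)+\varepsilon\int_{t_1}^{t}|f'(s)|\,\drm s$ for $t\ge t_1$.

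Next I would split into two cases according to whether $\int_{t_1}^{\infty}|f'(s)|\,\drm s$ converges. In the divergent case the displayed identity together with positivity of $f$ forces $\varepsilon=1$ and $f(t)\to\infty$ (so here the hypothesis selects the alternative ``$f\to\infty$''), and moreover $f(t)\sim\int_{t_1}^{t}|f'(s)|\,\drm s$; divergence of $\int^{\infty}s^{\vartheta}L(s)\,\drm s$ forces $\vartheta\ge-1$. If $\vartheta>-1$, Proposition~\ref{T:karam}(ii) gives $\int_{t_1}^{t}|f'(s)|\,\drm s\sim t^{\vartheta+1}L(t)/(\vartheta+1)$, whence $tf'(t)/f(t)\sim\vartheta+1$; if $\vartheta=-1$, Proposition~\ref{T:karam}(iii) (divergent alternative) gives $\widetilde L(t):=\int_{t_1}^{t}L(s)/s\,\drm s\in\SV$ with $L(t)/\widetilde L(t)\to0$, so $f(t)\sim\widetilde L(t)$ and $tf'(t)/f(t)\sim L(t)/\widetilde L(t)\to0=\vartheta+1$. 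In the convergent case $f(t)$ tends to a finite limit, which by the dichotomy hypothesis must equal $0$; thus $f(t)=-\varepsilon\int_{t}^{\infty}|f'(s)|\,\drm s$, and positivity forces $\varepsilon=-1$, i.e. $f(t)=\int_{t}^{\infty}|f'(s)|\,\drm s$; convergence of $\int^{\infty}s^{\vartheta}L(s)\,\drm s$ forces $\vartheta\le-1$. If $\vartheta<-1$, Proposition~\ref{T:karam}(i) gives $f(t)\sim t^{\vartheta+1}L(t)/(-\vartheta-1)$, whence $tf'(t)/f(t)=-t|f'(t)|/f(t)\to-(-\vartheta-1)=\vartheta+1$; if $\vartheta=-1$, Proposition~\ref{T:karam}(iii) (convergent alternative) gives $f(t)=\int_{t}^{\infty}L(s)/s\,\drm s=\widetilde L(t)\in\SV$ with $L(t)/\widetilde L(t)\to0$, so $tf'(t)/f(t)=-L(t)/\widetilde L(t)\to0=\vartheta+1$. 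In every case $tf'(t)/f(t)\to\vartheta+1$, which finishes the argument.

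The step I expect to be the most delicate is the borderline index $\vartheta=-1$: it is not covered by parts (i)--(ii) of Karamata's Integration Theorem and has to be handled through part (iii), with the extra bookkeeping of distinguishing its convergent and divergent alternatives. Alongside this, one must keep careful track of which of the two behaviours of $f$ at infinity actually occurs and of the sign $\varepsilon$ of $f'$; both are pinned down, case by case, by the positivity of $f$ together with the hypothesis that $f$ tends to $0$ or to $\infty$. The remaining steps (the asymptotic evaluations of $tf'(t)/f(t)$) are routine substitutions into the Karamata asymptotics.
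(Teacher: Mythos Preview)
Your proof of part (ix) is correct and follows essentially the same route as the paper's own proof: reduce to showing $tf'(t)/f(t)\to\vartheta+1$ by writing $f$ as an integral of $f'$ and applying the appropriate clause of Karamata's Integration Theorem (Proposition~\ref{T:karam}), with part (iii) handling the borderline $\vartheta=-1$ and parts (i)--(ii) handling $\vartheta<-1$ and $\vartheta>-1$. The only cosmetic difference is that you organize the case split by convergence/divergence of $\int^\infty|f'|$ and then deduce the range of $\vartheta$, whereas the paper splits directly on $\vartheta$; your write-up is more detailed on the sign bookkeeping and the $\vartheta=-1$ subcases, but the substance is identical.
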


Some other properties of $\RV$ functions, needed in the proofs of the main results,  are presented in some auxiliary lemmas  in the last section.		

\medskip	

Finally, we recall the definition of the De Haan class $\Pi$, together with some useful properties.

A measurable function $f:[a,\infty)\to\R$ is said to belong to the
class $\Pi$ if there exists a function $w:(0,\infty)\to(0,\infty)$
such that for $\lambda>0$
\begin{equation} \label{D:dehaan1}
\lim_{t\to\infty}\frac{(f(\lambda t)-f(t))}{w(t)}=\ln \lambda;
\end{equation}
we write $f\in\Pi$ or $f\in\Pi(w)$. The function $w$ is called an
\textit{auxiliary function} for $f$. The class $\Pi$, after taking
absolute values, forms a proper subclass of $\SV$.
	
Next we give some properties of the class $\Pi$.
	The proofs of (i)--(ii) can be found in the monographs \cite{geluk, dehaan}.
	For
	(iii) see e.g. \cite{pr-methods}.

\begin{proposition}\label{P:Pi}
	\
	\begin{itemize} \itemsep=0mm
		\item[(i)]
		If $f\in\Pi(v),$ then
		$ 
		v(t)\sim f(t)-\frac 1t\int_a^t f(s)\,\drm s
		$ 
		as $t\to\infty$.
		
		\item[(ii)] 
		If $f\in\Pi$, then $\lim_{t\to\infty}f(t)=:f(\infty)\le\infty$
		exists. If the limit is infinite, then $f\in\SV$. If the limit is
		finite, then $f(\infty)-f(t)\in\SV$.
		
		\item[(iii)] 
		If $f'\in\RV(-1)$, then $f\in\Pi(tf'(t))$.
		\end{itemize}
		\end{proposition}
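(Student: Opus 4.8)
The plan for parts (i) and (ii) is to rely on classical de Haan theory (\cite{geluk,dehaan}), and I only indicate the mechanism. For (i), put $F(t)=\frac1t\int_a^t f(s)\,\drm s$; the substitution $s=ut$ gives $f(t)-F(t)=\int_{a/t}^1\bigl(f(t)-f(ut)\bigr)\,\drm u$, and since $f\in\Pi(v)$ yields $(f(ut)-f(t))/v(t)\to\ln u$, one expects $(f(t)-F(t))/v(t)\to\int_0^1(-\ln u)\,\drm u=1$, which is the claim. The one delicate point is that this convergence is uniform only on compact $u$-subsets of $(0,\infty)$, so the contribution of $u$ near $0$ must be controlled separately, using that $|f|\in\SV$ grows subpolynomially and hence $f(ut)$ is uniformly negligible there. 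For (ii), I would deduce it from (i): with $F$ as above one has $(tF(t))'=f(t)$ (a.e.), hence $tF'(t)=f(t)-F(t)$, which by (i) is $\sim v(t)>0$ for large $t$; therefore $F$ is eventually increasing and $F(\infty):=\lim_t F(t)$ exists in $(-\infty,\infty]$. Plugging $f(s)-F(s)\sim v(s)$ (with $v\in\SV$) into $F(t)=F(t_0)+\int_{t_0}^t\frac{f(s)-F(s)}{s}\,\drm s$ and applying Proposition~\ref{T:karam}(iii) shows that when $\int^\infty v(s)/s\,\drm s$ diverges one gets $f(t)\sim F(t)\in\SV$ and $f(t)\to\infty$, whereas when it converges one gets $f(t)\to F(\infty)$ and $f(\infty)-f(t)\sim\int_t^\infty v(s)/s\,\drm s\in\SV$.

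The substantive part for us is (iii), where the plan is a direct computation. Since $f'\in\RV(-1)$ we have $f'>0$ for large $t$ and $tf'(t)=L(t)\in\SV$, so $w(t):=tf'(t)$ is positive and slowly varying, hence admissible as an auxiliary function. For $\lambda>0$ and large $t$, the substitution $s=ut$ gives
$$\frac{f(\lambda t)-f(t)}{tf'(t)}=\frac{1}{tf'(t)}\int_t^{\lambda t}f'(s)\,\drm s=\int_1^\lambda\frac{f'(ut)}{f'(t)}\,\drm u.$$
Since $f'\in\RV(-1)$, the Uniform Convergence Theorem gives $f'(ut)/f'(t)\to u^{-1}$ uniformly for $u$ on the compact interval with endpoints $1$ and $\lambda$; passing to the limit yields $\int_1^\lambda u^{-1}\,\drm u=\ln\lambda$, that is, $f\in\Pi(tf'(t))$.

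I do not expect a real obstacle in (iii): it is essentially one invocation of the Uniform Convergence Theorem, the only bookkeeping being the orientation of the integral when $\lambda<1$ and the immediate check that $tf'(t)$ is positive and slowly varying. The only genuinely subtle step in the whole proposition is the near-origin estimate in the proof of (i); since that part is quoted from \cite{geluk,dehaan}, it causes no difficulty in the present context.
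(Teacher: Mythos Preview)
Your proposal is correct. In fact, the paper does not prove this proposition at all: the text preceding it simply states that the proofs of (i)--(ii) can be found in the monographs \cite{geluk,dehaan} and that (iii) is in \cite{pr-methods}. Your treatment therefore goes well beyond what the paper offers. Your direct computation for (iii) via the Uniform Convergence Theorem is the standard argument and is exactly what one finds in the cited source; your sketches for (i)--(ii), including the honest flag on the near-origin estimate in (i) and the derivation of (ii) from (i) through $tF'(t)=f(t)-F(t)$ combined with Proposition~\ref{T:karam}(iii), are sound outlines of the classical proofs. One small point worth making explicit in your sketch of (ii): you use that the auxiliary function $v$ is slowly varying, which is true but is itself a (standard) consequence of the definition rather than part of it.
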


		\section{Main results} \label{S: main}
		
		Denote
		$$
		G(t)=\Phi^{-1}\left(\frac{tp(t)}{r(t)}\right)= \left(\frac{tp(t)}{r(t)}\right)^{\beta-1},
		$$
		where $\beta$ is the conjugate number of $\alpha$, i.e., $1/\alpha+1/\beta=1$. We start by establishing conditions guaranteeing slow variation of increasing solutions, for which we also derive asymptotic formulae.
		
		\begin{theorem} \label{T:SV}
		(I) Assume that $\int_a^\infty p(s)\,\drm s=\infty$ and
		\begin{equation} \label{trintp}
		\lim_{t \to \infty}\frac{t^{\alpha-1}}{r(t)}\int_a^t p(s)\,\drm s=0.
		\end{equation}	
		Then $\IS\subset\NSV$.
		If, in addition, $p\in\RV(\delta)$ with
		$\delta>-1$, then for any $y\in\IS$  the
		following hold.
		
		(i) If $\int_a^\infty G(s)\,\drm s=\infty$, then $y$ satisfies the formula
		\begin{equation} \label{f1}
		y(t)=\exp\left\{\int_a^t(1+o(1))\frac{G(s)}{\Phi^{-1}(\delta+1)}\,\drm s\right\}
		\end{equation}	
		as $t\to\infty$. Moreover, $\mS_{\NSV}=\mS_{\SV}=\IS=\IS_{\infty,\infty}$.
		
		(ii)  If $\int_a^\infty G(s)\,\drm s<\infty$,
		then $y$ satisfies the formula
		\begin{equation} \label{f2}
		y(t)=N\exp\left\{-\int_t^\infty(1+o(1))\frac{G(s)}{\Phi^{-1}(\delta+1)}\,\drm s\right\}
		\end{equation}
		as $t\to\infty$, where $N:=\lim_{t\to\infty}y(t)\in(0,\infty)$.
		Moreover, $\mS_{\NSV}=\mS_{\SV}=\IS=\IS_{B,\infty}$.
		
		(II) Let conditions at point (I) be satisfied and let $r\in\RV(\gamma)$ hold. Then $\gamma\ge\alpha+\delta$. For
		any $y\in\IS$ it holds
		$y(t)\in\Pi(ty'(t))$ provided $\gamma=\alpha+\delta$. Moreover, if  $\gamma=\alpha+\delta$ and $\int_a^\infty G(s)\,\drm s<\infty$, or  $\gamma>\alpha+\delta$, then
		\begin{equation} \label{N-y}
		N-y(t)\sim\frac{N}{\Phi^{-1}(\delta+1)}\int_t^\infty G(s)\,\drm s\in\RV((\delta+1-\gamma)(\beta-1)+1)
		\end{equation}
		as $t\to\infty$. In particular, if  $\gamma=\alpha+\delta$, then
		$|N-y|\in\SV$ and
		\begin{equation} \label{LLN}
		\frac{L_p^{\beta-1}(t)}{L_r^{\beta-1}(t)(N-y(t))}=o(1)
		\end{equation}
		as $t\to\infty$.
	\end{theorem}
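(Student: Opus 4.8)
The plan is to fix $y\in\IS$, integrate \eqref{e} twice, and feed the outcome into Karamata's theory to obtain progressively sharper control of $y'/y$. First I would prove $\IS\subset\NSV$. For $y\in\IS$ the quasiderivative $y^{[1]}=r\Phi(y')$ is eventually positive and, since $(y^{[1]})'=p\,\Phi(y(\tau(t)))>0$, eventually increasing, so $y^{[1]}(t)=y^{[1]}(t_0)+\int_{t_0}^t p(s)\Phi(y(\tau(s)))\,\drm s$. Because $y$ is increasing and $\tau(s)\le s$, we have $y(\tau(s))\le y(t)$ for $t_0\le s\le t$, hence $y^{[1]}(t)\le y^{[1]}(t_0)+\Phi(y(t))\int_{t_0}^t p(s)\,\drm s$; since $\int_a^\infty p=\infty$ and $y$ is bounded below by a positive constant, this yields $y^{[1]}(t)\le 2\Phi(y(t))\int_{t_0}^t p(s)\,\drm s$ for large $t$. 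Solving $y'=\Phi^{-1}(y^{[1]}/r)$ and using $\Phi^{-1}(uv)=\Phi^{-1}(u)\Phi^{-1}(v)$ and $t\,\Phi^{-1}(\xi)=\Phi^{-1}(t^{\alpha-1}\xi)$ gives
\begin{equation*}
0<\frac{ty'(t)}{y(t)}\le\Phi^{-1}\!\left(\frac{2t^{\alpha-1}}{r(t)}\int_{t_0}^t p(s)\,\drm s\right)\longrightarrow 0
\end{equation*}
by \eqref{trintp}, so $ty'(t)/y(t)\to0$ and $y\in\NSV$.

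Next, under $p\in\RV(\delta)$ with $\delta>-1$, I would upgrade this to the asymptotic formulae. Conditions \eqref{tau}, \eqref{tau+} give $c\,t\le\tau(t)\le t$ for large $t$ with some $c\in(0,1)$, so the Uniform Convergence Theorem applied to $y\in\SV$ yields $y(\tau(t))\sim y(t)$, whence $p\,\Phi(y\circ\tau)\sim p\,\Phi(y)\in\RV(\delta)$ by Proposition~\ref{P:RV}(ii),(v). Then Proposition~\ref{T:karam}(ii) gives $\int_{t_0}^t p(s)\Phi(y(s))\,\drm s\sim tp(t)\Phi(y(t))/(\delta+1)\to\infty$, so $y^{[1]}(t)\sim tp(t)\Phi(y(t))/(\delta+1)$, and applying $\Phi^{-1}$,
\begin{equation*}
\frac{y'(t)}{y(t)}=\Phi^{-1}\!\left(\frac{y^{[1]}(t)}{r(t)\Phi(y(t))}\right)\sim\frac{1}{\Phi^{-1}(\delta+1)}\,\Phi^{-1}\!\left(\frac{tp(t)}{r(t)}\right)=\frac{G(t)}{\Phi^{-1}(\delta+1)}.
\end{equation*}
If $\int_a^\infty G=\infty$, I would integrate this over $[a,t]$, absorbing $\ln y(a)$ and the error term into $o(1)$ (possible since $\int_a^t G\to\infty$), to obtain \eqref{f1}; then $\int_a^\infty G=\infty$ forces $y(t)\to\infty$ and $y^{[1]}(t)\to\infty$, so $\IS=\IS_{\infty,\infty}$. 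If $\int_a^\infty G<\infty$, then $\int_a^\infty y'/y<\infty$, so $y(t)\to N\in(0,\infty)$, and integrating over $[t,\infty)$ gives \eqref{f2}; again $y^{[1]}\to\infty$, so $\IS=\IS_{B,\infty}$. As $\IS\subset\NSV\subset\SV$, the equalities $\mS_{\NSV}=\mS_{\SV}=\IS$ reduce to $\mS_{\SV}\subset\IS$, i.e.\ to the absence of slowly varying eventually positive decreasing solutions under these hypotheses; for this one uses that $\int_a^\infty p=\infty$ forces $y(t)\to0$ for every $y\in\DS$ (else $y^{[1]}\to+\infty$, impossible) together with the auxiliary properties of $\RV$ functions from the last section.

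For part (II), assume in addition $r\in\RV(\gamma)$. From \eqref{trintp} and $\int_a^t p\sim tp(t)/(\delta+1)$ we get $t^\alpha p(t)/r(t)\to0$, and since $t^\alpha p\in\RV(\alpha+\delta)$ while $r\in\RV(\gamma)$, Proposition~\ref{P:RV}(i) forces $\gamma\ge\alpha+\delta$ (and $L_p/L_r\to0$ if $\gamma=\alpha+\delta$). When $\gamma=\alpha+\delta$, the identity $(\alpha-1)(\beta-1)=1$ gives $G(t)=t^{-1}(L_p(t)/L_r(t))^{\beta-1}\in\RV(-1)$, hence $y'(t)\sim G(t)y(t)/\Phi^{-1}(\delta+1)\in\RV(-1)$ and $y\in\Pi(ty'(t))$ by Proposition~\ref{P:Pi}(iii). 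For \eqref{N-y}, note that $\gamma>\alpha+\delta$ already yields $G\in\RV(\theta)$ with $\theta=(1+\delta-\gamma)(\beta-1)<-1$, so $\int_a^\infty G<\infty$; hence in both listed cases $y(t)\to N$, and since $\ln(N/y(t))\to0$,
\begin{equation*}
N-y(t)=y(t)\bigl(e^{\ln(N/y(t))}-1\bigr)\sim N\ln\frac{N}{y(t)}\sim\frac{N}{\Phi^{-1}(\delta+1)}\int_t^\infty G(s)\,\drm s.
\end{equation*}
The regular-variation index follows from Proposition~\ref{T:karam}(i) when $\theta<-1$ and from Proposition~\ref{T:karam}(iii) when $\gamma=\alpha+\delta$ (then $\theta=-1$ and $(\delta+1-\gamma)(\beta-1)+1=0$). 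Finally, for $\gamma=\alpha+\delta$ we have $|N-y|=N-y\in\SV$, and writing $\widetilde L(t)=\int_t^\infty(L_p(s)/L_r(s))^{\beta-1}s^{-1}\,\drm s$, so that $N-y(t)\sim N\widetilde L(t)/\Phi^{-1}(\delta+1)$, Proposition~\ref{T:karam}(iii) gives $(L_p(t)/L_r(t))^{\beta-1}/\widetilde L(t)\to0$, which is \eqref{LLN}.

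The main obstacle is not a single computation but two soft points. The first is the passage $y(\tau(t))\sim y(t)$: the delay conditions \eqref{tau}, \eqref{tau+} enter essentially here, via the bound $\tau(t)\asymp t$ together with the \emph{uniformity} in the Uniform Convergence Theorem (the pointwise definition of $\SV$ would not suffice). The second is the inclusion $\mS_{\SV}\subseteq\IS$ (and $\mS_{\NSV}\subseteq\IS$), which does not follow from the estimates for increasing solutions and requires a separate analysis of the decreasing class $\DS$.
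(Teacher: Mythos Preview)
Your argument is essentially the same route as the paper's: integrate \eqref{e} once, squeeze $ty'(t)/y(t)$ between $0$ and a quantity controlled by \eqref{trintp} to get $\IS\subset\NSV$; then use $\tau(t)\asymp t$ and the Uniform Convergence Theorem to replace $y(\tau(t))$ by $y(t)$, apply Karamata to obtain $y^{[1]}(t)\sim tp(t)\Phi(y(t))/(\delta+1)$ and hence $y'/y\sim G/\Phi^{-1}(\delta+1)$, and integrate to get \eqref{f1}--\eqref{f2}. Part (II) is also handled as in the paper; your derivation of \eqref{N-y} via $N-y(t)\sim N\ln(N/y(t))$ is a harmless variant of the paper's direct integration of $y'(t)\sim \dfrac{N}{\Phi^{-1}(\delta+1)}G(t)$.

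The one place where your sketch drifts is the inclusion $\mS_{\SV}\subseteq\IS$. You argue that $\int_a^\infty p=\infty$ forces $y(t)\to 0$ for every $y\in\DS$ and then appeal to unspecified ``auxiliary properties''. The fact $y(t)\to 0$ is correct but irrelevant: slowly varying functions may well tend to zero, so this alone yields no contradiction. The paper's argument is direct and does not use $y\to 0$ at all. If $y\in\DS\cap\SV$, then (by Lemma~\ref{L:fg} and Proposition~\ref{P:RV}) $p\,\Phi(y\circ\tau)\in\RV(\delta)$ with $\delta>-1$, so $\int_b^t p(s)\Phi(y(\tau(s)))\,\drm s\to\infty$; but for $y\in\DS$ the quasiderivative $y^{[1]}$ is negative and increasing, hence bounded above by $0$, while the same integration identity $y^{[1]}(t)=y^{[1]}(b)+\int_b^t p\,\Phi(y\circ\tau)$ would force $y^{[1]}(t)\to+\infty$. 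That is the contradiction. Replacing your vague step by this two-line observation makes the class equalities $\mS_{\NSV}=\mS_{\SV}=\IS$ complete.
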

	
	Condition \eqref{trintp} is necessary in a certain sense. More precisely, the following lemma holds.
	
	\begin{lemma} \label{L:nec}
		Let $r\in\RV(\gamma)$ with $\gamma>\alpha-1$, and $\tau(t)\asymp t$ as
		$t\to\infty$.
		If there exists $y\in\IS\cap\NSV$,
		then \eqref{trintp} holds.
		If, in addition, $p\in\RV(\delta)$, then
		\begin{equation} \label{tLpLr}
		\lim_{t \to \infty}t^{\alpha+\delta-\gamma}\frac{L_p(t)}{L_r(t)}=0.	
		\end{equation}	
	\end{lemma}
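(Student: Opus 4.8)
The plan is to extract, from the existence of a single $y\in\IS\cap\NSV$, enough quantitative information about $y^{[1]}=r\Phi(y')$ and its integral to force \eqref{trintp}. The starting point is the integral form of \eqref{e}: integrating from $t_0$ to $t$ gives
\begin{equation*}
y^{[1]}(t)=y^{[1]}(t_0)+\int_{t_0}^t p(s)\Phi(y(\tau(s)))\,\drm s.
\end{equation*}
Since $y$ is increasing and positive and $\int_a^\infty p=\infty$ would be the relevant case (if the integral converged, \eqref{trintp} holds trivially once one controls $t^{\alpha-1}/r(t)$, but that needs care — see below), the quasiderivative $y^{[1]}$ is increasing. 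First I would show $y^{[1]}(t)\to\infty$: if not, then $y^{[1]}$ is bounded, so $\Phi(y'(t))=y^{[1]}(t)/r(t)$ and, because $r\in\RV(\gamma)$ with $\gamma>\alpha-1$, Proposition~\ref{P:RV}(vii) gives $y'(t)=\Phi^{-1}(y^{[1]}(t)/r(t))\to 0$ fast enough that $y$ converges — but more to the point, $y'(t)\le C r(t)^{-(\beta-1)}=C t^{-\gamma(\beta-1)}L_r(t)^{-(\beta-1)}$ with $\gamma(\beta-1)>(\alpha-1)(\beta-1)=1$, hence $y$ is bounded, contradicting nothing by itself; so the real argument must use $y\in\NSV$.

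The key is the $\NSV$ hypothesis combined with $\tau(t)\asymp t$. Since $y\in\NSV\cap C^1$ and $y$ is increasing, $ty'(t)/y(t)\to 0$, i.e. $y'(t)=o(y(t)/t)$. Also, by $\tau(t)\asymp t$ and the Uniform Convergence Theorem (Proposition, first one in §2), $y(\tau(t))\sim y(t)$, hence $\Phi(y(\tau(t)))\sim\Phi(y(t))=y(t)^{\alpha-1}$. Feeding this into the integral identity and using Karamata's theorem (Proposition~\ref{T:karam}) — after checking that $p(s)\Phi(y(s))$ is regularly varying, which holds if $p\in\RV(\delta)$ or, in the general case, can be replaced by the monotonicity-based bound $\int_{t_0}^t p(s)\Phi(y(s))\,\drm s\ge \Phi(y(t_0))\int_{t_0}^t p(s)\,\drm s$ from below and $\le \Phi(y(t))\int_{t_0}^t p(s)\,\drm s$ from above — I get
\begin{equation*}
y^{[1]}(t)\le y^{[1]}(t_0)+\Phi(y(t))\int_{t_0}^t p(s)\,\drm s .
\end{equation*}
On the other hand $y^{[1]}(t)=r(t)\Phi(y'(t))$, so $\Phi(y'(t))\le \Phi(y(t))\,r(t)^{-1}\bigl(\int_a^t p+o(\int_a^t p)\bigr)$, giving after applying $\Phi^{-1}$ and dividing by $y(t)$:
\begin{equation*}
\frac{y'(t)}{y(t)}\le (1+o(1))\left(\frac{1}{r(t)}\int_a^t p(s)\,\drm s\right)^{\beta-1}.
\end{equation*}
Now multiply by $t$: the left side is $ty'(t)/y(t)\to 0$ by $y\in\NSV$, but I need the \emph{reverse} direction — so instead I would argue that \eqref{trintp} \emph{fails} means $\limsup t^{\alpha-1}r(t)^{-1}\int_a^t p>0$, and then run a \emph{lower} bound on $y^{[1]}$ (using $\int_{t_0}^t p(s)\Phi(y(s))\,\drm s\gtrsim \Phi(y(t_0)) \int_{t_0}^t p(s)\,\drm s$ together with the eventual growth of $y$, or better, a bootstrap: once $y^{[1]}$ is large, $y$ grows, which makes the integrand larger) to produce $ty'(t)/y(t)\not\to 0$, contradicting $y\in\NSV$.

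The main obstacle is this last step: turning the upper estimate into the needed contradiction requires that $y(\tau(t))$ not be too much smaller than $y(t)$ in the lower bound — this is exactly where $\tau(t)\asymp t$ is used, via $y\in\SV$ and the Uniform Convergence Theorem to get $y(\tau(t))\asymp y(t)$ — and requires handling the possibility that $\int_a^\infty p<\infty$, where one must separately show that $y\in\NSV$ together with $\gamma>\alpha-1$ already forces $t^{\alpha-1}/r(t)=t^{\alpha-1-\gamma}L_r(t)^{-1}\to 0$ (true since $\alpha-1-\gamma<0$, by Proposition~\ref{P:RV}(vii)), so \eqref{trintp} holds automatically in that case. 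Once \eqref{trintp} is established and $p\in\RV(\delta)$, $r\in\RV(\gamma)$, the refined conclusion \eqref{tLpLr} follows by Karamata (Proposition~\ref{T:karam}(ii), applicable since $\delta>-1$ is forced — if $\delta\le -1$ then $\int_a^\infty p<\infty$ and the previous paragraph applies): $\int_a^t p(s)\,\drm s\sim t^{\delta+1}L_p(t)/(\delta+1)$, so $t^{\alpha-1}r(t)^{-1}\int_a^t p\sim (\delta+1)^{-1} t^{\alpha+\delta-\gamma}L_p(t)/L_r(t)\to 0$, which is precisely \eqref{tLpLr}.
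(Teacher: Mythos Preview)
Your direct-integration approach has a genuine gap exactly where you flag ``the main obstacle.'' From the lower bound
\[
y^{[1]}(t)\ge y^{[1]}(t_0)+\Phi\bigl(y(\tau(t_0))\bigr)\int_{t_0}^t p(s)\,\drm s
\]
you only extract $(ty'(t))^{\alpha-1}\ge c_0\cdot\frac{t^{\alpha-1}}{r(t)}\int_{t_0}^t p$, with $c_0$ depending on $y(t_0)$, not on $y(t)$. If \eqref{trintp} fails along a sequence $t_n$, this yields $t_n y'(t_n)\ge c>0$, which does \emph{not} contradict $ty'(t)/y(t)\to 0$ once $y(t)\to\infty$ --- and $y\in\NSV$ certainly allows that. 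The bootstrap you sketch does not close: since $y$ is increasing, the factor $\Phi(y(s))$ in $\int_{t_0}^t p(s)\Phi(y(s))\,\drm s$ cannot be bounded below by $\Phi(y(t))$, and replacing the constant lower bound by a Potter-type estimate $y(s)\ge C_\varepsilon(s/t)^\varepsilon y(t)$ spoils the comparison with $\int_{t_0}^t p$ in the absence of any regularity assumption on $p$.

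The paper avoids this by building the normalization by $\Phi(y)$ into the unknown: set $w=r\,\Phi(y'/y)$. Then $w$ satisfies the Riccati-type equation
\[
w'(t)=\frac{\Phi(y(\tau(t)))}{\Phi(y(t))}\,p(t)-(\alpha-1)r^{1-\beta}(t)w^\beta(t).
\]
Integrating on $[b,t]$ and multiplying by $t^{\alpha-1}/r(t)$ produces, besides the $p$-integral, three terms that all vanish: $\frac{t^{\alpha-1}}{r(t)}w(t)=(ty'/y)^{\alpha-1}\to 0$ by $y\in\NSV$; $\frac{t^{\alpha-1}}{r(t)}w(b)\to 0$ since $\gamma>\alpha-1$; and $\Psi(t)=\frac{t^{\alpha-1}}{r(t)}\int_b^t r^{1-\beta}w^\beta\,\drm s\to 0$ by L'H\^opital (after replacing $r$ by an asymptotically equivalent $\NRV(\gamma)\cap C^1$ function, the ratio of derivatives is $(ty'/y)^\alpha/\bigl(tr'/r-(\alpha-1)\bigr)\to 0/(\gamma-\alpha+1)=0$). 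This forces $\frac{t^{\alpha-1}}{r(t)}\int_b^t p(s)\Phi\bigl(y(\tau(s))/y(s)\bigr)\,\drm s\to 0$, and then $y(\tau(s))/y(s)\ge K>0$ (from $y\in\SV$, $\tau\asymp t$, Lemma~\ref{L:fg}) yields \eqref{trintp}. Your treatment of the case $\int_a^\infty p<\infty$ and the deduction of \eqref{tLpLr} from \eqref{trintp} via Proposition~\ref{T:karam} are correct.
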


		Note that if
		\begin{equation} \label{pr}
		p\in\RV(\delta),\ \ r\in\RV(\delta+\alpha)
		\end{equation}
		hold, then
	\begin{equation} \label{G}
		G(t)=\frac{1}{t}\left(\frac{L_p(t)}{L_r(t)}\right)^{\beta-1}\in\RV(-1).
	\end{equation}
Observe that \eqref{pr} along with $\delta>-1$	yield $\gamma>\alpha-1$ (i.e., the assumption from Lemma~\ref{L:nec}).	
		Further, condition \eqref{trintp} (as well as condition \eqref{tLpLr}) reduces to
		\begin{equation} \label{Lpr}
		\lim_{t \to \infty}\frac{L_p(t)}{L_r(t)}=0,	
		\end{equation}
which is \eqref{tLpLr} with $\gamma=\delta+\alpha$.		

\medskip
	
		From the non-functional case (see \cite{rt}) we know that $\int_a^\infty r^{1-\beta}(s)\,\drm s=\infty$
		and
		\begin{equation} \label{trp1}
		\lim_{t \to \infty}\frac{t^{\alpha-1}}{r(t)}\int_t^\infty p(s)\,\drm s=0
		\end{equation}
		imply $\DS\subset\NSV$. Under the additional condition $p\in\RV(\delta)$ with $\delta<-1$, asymptotic formulae similar to \eqref{f1} and \eqref{f2} can be established.
		In particular, $\SV$ solutions must be sought among decreasing solutions.
		This result can be understood as a certain complement to Theorem~\ref{T:SV}. However, as we will see, here we encounter a quite big difference between the functional and non-functional case.
		For instance, let $r(t)=1$, $\alpha=2$, and $\tau(t)=t-1$ in \eqref{e}. Then $y(t)=e^{-t^2}$ is a solution of \eqref{e} which takes here the form $y''=p(t)y(t-1)$, where $p(t)=(4t^2-2)e^{1-2t}$. Clearly, $\lim_{t \to \infty}t^2p(t)=0$, and this condition implies also $\lim_{t\to\infty}t\int_t^\infty p(s)\,\drm s=0$, i.e., condition \eqref{trp1}. However,
		$y\in\DS$, but $y\not\in\RV$. Thus we see that (in contrast to the situation for $\IS$ discussed in Theorem~\ref{T:SV}), for decreasing solutions in our framework, qualitative behavior may substantially change when passing from ordinary to functional equations. An open problem is whether or not $\int_a^\infty r^{1-\beta}\,\drm s=\infty$ and \eqref{trp1} imply $\DS\subset\NSV$ when assuming, in addition, that $p\in\RV(\delta)$ with $\delta<-1$.
		
		Note that if $p\in\RV(\delta)$ with $\delta\ne-1$, then both \eqref{trintp} when $\delta>-1$, and \eqref{trp1} when $\delta<-1$, reduce to the condition
		$$
		\lim_{t\to\infty}\frac{t^\alpha p(t)}{r(t)}=0,
		$$
which can easily be seen from the Karamata theorem.		

\medskip

	We proceed with a complementary case with respect to Theorem~\ref{T:SV} in the sense
	that we study increasing solutions when $\delta<-1$
	under the conditions
	\eqref{pr}
	and \eqref{trintp} (which yield \eqref{Lpr}). We shall prove
	regular variation of these solutions where the index is equal to
\begin{equation} \label{def-rho}
	\varrho:=\frac{-1-\delta}{\alpha-1},
	\end{equation}
	and derive asymptotic formulae.
	Denote
	$$
	H_\tau(t)=(t\tau'(t))^{\alpha-1}\frac{p(t)}{r(\tau(t))}.
	$$
	If \eqref{pr} and the first condition in \eqref{tau-add} hold,
	then $H_\tau\in\RV(-1)$ and $H_\tau$ can be written as
	$$
	 H_\tau(t)=\frac1t\left(\frac{t}{\tau(t)}\right)^{\delta+\alpha}(\tau'(t))^{\alpha-1}\frac{L_p(t)}{L_r(\tau(t))}.
	$$
	Since the
	convergence/diver\-gen\-ce of the integrals $\int_a^\infty G$ and
	$\int_a^\infty H_\tau$ plays an important role, the following
	example is of interest.
	\begin{example} \label{E:HG}
		Assume for simplicity that $\tau'(t)\sim \lambda$ with $\lambda\in(0,1]$ as $t\to\infty$; note that $\tau(t)=\lambda t$ or $\tau(t)=t-\sigma$
		both satisfy this condition.
		Then $$H_\tau(t)\sim\frac{\lambda^{-\delta-1} t^{\alpha-1}p(t)}{r(\tau(t))}\sim\frac{\lambda^{-\delta-1} t^{\alpha-1}p(t)}{r(t)}
		=\frac{\lambda^{-\delta-1}}{t}\cdot\frac{L_p(t)}{L_r(t)}$$ as $t\to\infty$
		by Lemma~\ref{L:fg} provided \eqref{pr} holds.
		Recall that $G$ takes here the form \eqref{G}.
		Taking $r,p$ such that
		$L_p(t)/L_r(t)\sim\ln^\gamma t$ as $t\to\infty$, $\gamma\in(-\infty,0)$, we see that
		\eqref{Lpr} is fulfilled. Moreover, if $\alpha<2$ and
		$-1<\gamma<1-\alpha$ then $\int_a^\infty G<\infty$ while
		$\int_a^\infty H_\tau=\infty$, and if $\alpha>2$ and $1-\alpha<\gamma<-1$
		then $\int_a^\infty G=\infty$ while $\int_a^\infty H_\tau<\infty$. Note
		that in the linear case (i.e., $\alpha=2$), $\lambda^{-\delta-1} G(t)\sim H_\tau(t)$ as $t\to\infty$,
		and thus half-linear equations exhibit more complex behavior.
	\end{example}
	
	\begin{theorem} \label{T:RV}
		Assume that \eqref{pr} holds and $\delta<-1$.
		Let
		\begin{equation} \label{tau-add}
		(r^{1-\beta}\circ\tau)\tau'\in\RV(\delta(1-\beta)-\beta)\ \ \text{and}\ \
		\left(\frac{L_p(t)}{L_r(t)}\right)^{\beta-1}\tau'(t)\to 0
		\end{equation}
		as $t\to\infty$. Then $\IS\subset\NRV(\varrho)$, where $\varrho$ is defined in \eqref{def-rho}, and
		$y^{[1]}(t)\in\Pi(tp(t)\Phi(y(\tau(t))))$ for every $y\in\IS$. In addition, for any $y\in\IS$ the following hold.
		
		(i) If $\int_a^\infty H_\tau(s)\,\drm s=\infty$, then
		\begin{equation} \label{f11}
		y(t)=tr^{1-\beta}(t)\exp\left\{\int_a^t(1+o(1))\frac{\beta-1}{\Phi(\varrho)}H_\tau(s)\,\drm s\right\}
		\end{equation}
		as $t\to\infty$. Moreover,  $\mS_{\NRV}(\varrho)=\mS_{\RV}(\varrho)=\IS=\IS_{\infty,\infty}$.
		
		(ii)  If $\int_a^\infty H_\tau(s)\,\drm s<\infty$, then
		\begin{equation} \label{f21}
		y(t)=A+\int_a^t M^{\beta-1}r^{1-\beta}(s)\exp\left\{-\int_s^\infty(1+o(1))\frac{\beta-1}{\varrho^{\alpha-1}}H_\tau(\xi)\,\drm\xi
		\right\}\,\drm s
		\end{equation}
		as $t\to\infty$, where 	$M=\lim_{t\to\infty}y^{[1]}(t)\in(0,\infty)$, for some $A\in\R$,
		and $\mS_{\NRV}(\varrho)=\mS_{\RV}(\varrho)=\IS=\IS_{\infty,B}$.
		Moreover, $M-y^{[1]}\in\SV$ and
		\begin{equation} \label{LLM}
		 \frac{t^{\delta+\alpha}(\tau'(t))^{\alpha-1}L_p(t)}{\tau^{\delta+\alpha}(t)L_r(t)(M-y^{[1]}(t))}=o(1)
		\end{equation}
		as $t\to\infty$.
	\end{theorem}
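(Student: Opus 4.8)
Throughout set $u:=y^{[1]}=r\Phi(y')$, so that $y'=r^{1-\beta}u^{\beta-1}$ and $u'=p\,\Phi(y\circ\tau)$, and put $I(t)=\int_a^t r^{1-\beta}(s)\,\drm s$; by \eqref{pr} we have $r^{1-\beta}\in\RV(\varrho-1)$ with $\varrho-1>-1$, so $I\to\infty$ and $I\in\NRV(\varrho)$ by Proposition~\ref{P:RV}(ix). The plan is to combine these two integral relations with Karamata's theorem, the delay being controlled by \eqref{tau-add}. Fix $y\in\IS$; on some $[t_0,\infty)$ we have $y,y'>0$, $\tau(t)\ge t_0$ and $u'>0$, so $u$ increases to $M\in(0,\infty]$. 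If $y$ were bounded, then $u'\le Cp$, and since $\delta<-1$ gives $\int^\infty p<\infty$, $u$ would be bounded, forcing $y'\asymp r^{1-\beta}$ and $y\asymp I\to\infty$ — a contradiction; thus $\IS=\IS_\infty$. Next, the first condition in \eqref{tau-add} states precisely that $(r^{1-\beta}\circ\tau)\tau'\in\RV(\varrho-1)$ (as $\delta(1-\beta)-\beta=\varrho-1$); since $(I\circ\tau)'=(r^{1-\beta}\circ\tau)\tau'$ and $I\circ\tau\to\infty$, Proposition~\ref{P:RV}(ix) gives $I\circ\tau\in\NRV(\varrho)$, and comparing the logarithmic derivatives of $I\circ\tau$ and of $I$ (both in $\NRV(\varrho)$) yields $t\tau'(t)/\tau(t)\to1$, i.e.\ $\tau\in\NRV(1)$. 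In particular $I(\tau(t))\sim t\,(r^{1-\beta}\circ\tau)(t)\tau'(t)/\varrho$, so the coefficient‑defined quantity $\tilde K(t):=p(t)I(\tau(t))^{\alpha-1}$ satisfies $\tilde K\in\RV(-1)$ and $\tilde K\sim H_\tau/\varrho^{\alpha-1}$; and, rewriting $tH_\tau(t)$ in terms of $(L_p/L_r)^{\beta-1}\tau'$ via $(\beta-1)(\alpha-1)=1$, the second condition in \eqref{tau-add} gives $tH_\tau(t)\to0$, hence $t\tilde K(t)\to0$. Thus $\int^\infty H_\tau<\infty\Leftrightarrow\int^\infty\tilde K<\infty$, and $\int_a^t H_\tau$, $\int_a^t\tilde K$ are slowly varying when divergent (Proposition~\ref{T:karam}(iii)).

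To prove $\IS\subset\NRV(\varrho)$, observe that monotonicity of $u$ gives, for large $t$, $y(\tau(t))\le y(t_0)+u(t)^{\beta-1}I(\tau(t))\le 2u(t)^{\beta-1}I(\tau(t))$, hence $u'(t)\le 2^{\alpha-1}u(t)\tilde K(t)$. If $\int^\infty H_\tau<\infty$ (case (ii)), Gronwall forces $u$ bounded, so $M<\infty$ and $y\in\IS_{\infty,B}$; an elementary averaging argument gives $y\sim M^{\beta-1}I$, while $y'\sim M^{\beta-1}r^{1-\beta}\in\RV(\varrho-1)$ with $y\to\infty$, so $y\in\NRV(\varrho)$ by Proposition~\ref{P:RV}(ix). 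If $\int^\infty H_\tau=\infty$ (case (i)), then $M=\infty$ — otherwise $u\to M<\infty$, $y\sim M^{\beta-1}I$, $u'\sim M\tilde K\sim(M/\varrho^{\alpha-1})H_\tau$, whence $\int^\infty u'=\infty$, contradicting $M<\infty$; with $\mathcal K(t):=\int_a^t\tilde K$ slowly varying and divergent, $t\tilde K(t)\to0$ gives $\mathcal K(\lambda t)-\mathcal K(t)\to0$, and then
$$
0\le u(\lambda t)-u(t)\le 2^{\alpha-1}\int_t^{\lambda t}u(s)\tilde K(s)\,\drm s\le 2^{\alpha-1}u(\lambda t)\big(\mathcal K(\lambda t)-\mathcal K(t)\big)=o\big(u(\lambda t)\big)
$$
for $\lambda\ge1$ (and symmetrically for $\lambda<1$), so $u\in\SV$; hence $y'=r^{1-\beta}u^{\beta-1}\in\RV(\varrho-1)$ and $y\in\NRV(\varrho)$ by Proposition~\ref{P:RV}(ix), with $u\to\infty$ giving $y\in\IS_{\infty,\infty}$. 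In either case $\IS\subset\NRV(\varrho)$; since any $\RV(\varrho)$ solution with $\varrho>0$ tends to $\infty$ and positive solutions are eventually monotone, such a solution belongs to $\IS$, so $\mS_{\NRV}(\varrho)=\mS_{\RV}(\varrho)=\IS$ (equal to $\IS_{\infty,\infty}$ in case (i), to $\IS_{\infty,B}$ in case (ii)).

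For the remaining assertions, with $y\in\NRV(\varrho)$, $\tau\in\NRV(1)$, $\tau(t)\to\infty$, Proposition~\ref{P:RV}(ii),(iii),(v) give $u'=p\,\Phi(y\circ\tau)\in\RV(\delta+\varrho(\alpha-1))=\RV(-1)$, so Proposition~\ref{P:Pi}(iii) yields $u\in\Pi(tu')=\Pi(tp\,\Phi(y\circ\tau))$. For the formulae, a Karamata estimate for $y(\tau(t))$ — using $u\in\SV$ in case (i), $u\to M$ in case (ii), $\tau\in\NRV(1)$ and $\tau(t)\asymp t$ — turns $u'(t)=p(t)\Phi(y(\tau(t)))$ into $(1+o(1))H_\tau(t)/\varrho^{\alpha-1}$ times the relevant power of $M$. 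In case (i), integrating $(\ln u)'=(1+o(1))H_\tau(t)/\varrho^{\alpha-1}$ yields $u^{\beta-1}(t)=c_0\exp\big\{\int_a^t(1+o(1))\frac{\beta-1}{\varrho^{\alpha-1}}H_\tau\big\}$, and $y(t)\sim t r^{1-\beta}(t)u(t)^{\beta-1}/\varrho$ (Proposition~\ref{T:karam}(ii)) gives \eqref{f11} after absorbing $c_0$ into the divergent exponent. In case (ii), $M-y^{[1]}(t)=\int_t^\infty u'=(1+o(1))\frac{M}{\varrho^{\alpha-1}}\int_t^\infty H_\tau$, which is slowly varying by Proposition~\ref{T:karam}(iii) — so $M-y^{[1]}\in\SV$ — and $u^{\beta-1}(t)=M^{\beta-1}\exp\big\{-(1+o(1))\frac{\beta-1}{\varrho^{\alpha-1}}\int_t^\infty H_\tau\big\}$, so integrating $y'=r^{1-\beta}u^{\beta-1}$ gives \eqref{f21}; finally \eqref{LLM} holds because its left‑hand side equals $(1+o(1))\,tH_\tau(t)/(M-y^{[1]}(t))$ and $tH_\tau(t)=o\big(\int_t^\infty H_\tau\big)$ by Proposition~\ref{T:karam}(iii).

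The genuinely delicate point is the regular variation of $y$ in case (i) — establishing it without presupposing it. The device is the one‑sided inequality $u'\le 2^{\alpha-1}u\tilde K$ with $t\tilde K(t)\to0$ — and it is exactly here that the second condition of \eqref{tau-add} is indispensable — which upgrades the mere monotonicity of $y^{[1]}$ to slow variation through the ``slowly varying increment'' estimate displayed above. A second, more computational, difficulty is recognizing that the first condition of \eqref{tau-add} amounts to $\tau\in\NRV(1)$ and that $H_\tau$ (rather than $t^{\alpha-1}p/r$) is the right asymptotic scale for the delayed term, which is what makes every subsequent Karamata computation close up.
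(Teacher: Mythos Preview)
Your proof is correct, but it follows a genuinely different route from the paper's. The paper proceeds by the \emph{reciprocal equation}: setting $u=y^{[1]}$, it observes (Lemma~\ref{L:rec}) that $u$ solves the half-linear delay equation
\[
(\widetilde r(t)\Phi^{-1}(u'))'=\widetilde p(t)\Phi^{-1}(u(\tau(t))),\qquad \widetilde r=p^{1-\beta},\ \widetilde p=\tau' r^{1-\beta}\circ\tau,
\]
checks via \eqref{tau-add} that $\widetilde p\in\RV(\widetilde\delta)$, $\widetilde r\in\RV(\widetilde\delta+\beta)$ with $\widetilde\delta=\delta(1-\beta)-\beta>-1$ and $L_{\widetilde p}/L_{\widetilde r}\to0$, and then simply \emph{applies Theorem~\ref{T:SV}} to the reciprocal equation. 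This yields $u\in\NSV$, $u\in\Pi(tu')$, and the asymptotic formulae for $u$ in one stroke; since for the reciprocal equation $\widetilde G=H_\tau$, the formulae \eqref{f11}, \eqref{f21}, \eqref{LLM} drop out by straightforward substitution. By contrast, you argue directly: from the one-sided bound $u'\le 2^{\alpha-1}u\tilde K$ with $t\tilde K(t)\to0$ you extract $u\in\SV$ via the ``slowly varying increment'' estimate, and for the formulae you first deduce $\tau\in\NRV(1)$ from the first condition in \eqref{tau-add} (a nice observation the paper does not need) so that the Karamata computation matches $H_\tau$. The paper's reduction is shorter and more structural --- it explains why $H_\tau$ is the natural scale (it is $\widetilde G$ for the reciprocal equation) and reuses Theorem~\ref{T:SV} wholesale --- whereas your argument is self-contained and shows that the reciprocal-equation machinery is not essential: a Gronwall/increment estimate together with Karamata suffices.
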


	\begin{remark}
		(i)	In fact -- as a closer examination of the proof shows -- because of
		the first additional condition in \eqref{tau-add}, we could assume only $p\in\RV(\delta)$
		instead of \eqref{pr}.
		Yet, for the proof of regular variation of solutions and asymptotic formulae, we could drop the requirement of $p$ being regularly varying (similarly as $r$ does not need to be regularly varying in Theorem~\ref{T:SV}).
		But, for simplicity and since we want to
		express some formulae in terms of $L_p, L_r$, and thus being in a better correspondence with Theorem~\ref{T:SV},  we prefer to take
		the stronger assumption.
		
		(ii) In view of Proposition~\ref{P:RV}, the former condition in
		\eqref{tau-add} is implied by $\tau'\in\SV$ (provided we assume
		$r\in\RV(\delta+\alpha)$).  The latter condition in
		\eqref{tau-add} is implied by \eqref{Lpr}, and
		$\limsup_{t\to\infty}\tau'(t)<\infty$ (provided we assume
		\eqref{pr}).
		
		(iii)
		Similarly as in Lemma~\ref{L:nec}, we can obtain a
		necessary condition in the setting of Theorem~\ref{T:RV}.
		In particular, if $\delta<-1$, $\tau(t)\asymp t$ as
		$t\to\infty$, \eqref{pr} is satisfied, and $\mathcal{S}_\NRV(\varrho)\ne\emptyset$,
		then \eqref{Lpr} holds.
	\end{remark}

	 In \cite{manojl-tanig,tanigawa2012} (see also \cite{tanigawa2008}), half-linear differential equations with both retarded and advanced arguments are considered.
		Necessary and sufficient conditions for
		the existence of a (generalized) $\SV$ solution and a (generalized) $\RV(1)$ solution are established.
		Note that the conditions on the coefficients are more general than ours and takes an integral
		form similar to \eqref{trintp} and \eqref{trp1}.
		The conditions on the delayed argument are slightly more special than our ones. The methods in \cite{manojl-tanig,tanigawa2012} are based on the results
		for the associated equation without deviating arguments
		and the Schauder-Tychonoff fixed point theorem.
		We emphasize that while in \cite{manojl-tanig,tanigawa2008,tanigawa2012} the existence of $\RV$ solutions is studied, here we attempt to deal with all positive increasing solutions.

		Our results are essentially new also in the linear case, i.e., when $\alpha=2$. The linear version of the existence results mentioned in the previous item can be found in
		\cite{kusano-maric2006,kusano-maric2007georg,kusano-maric2007bull}.

		If $\tau(t)=t$, Theorems \ref{T:SV} and \ref{T:RV} reduce to an improvement of the results in \cite{pr-methods,rt}; note that in those papers, conditions \eqref{pr} and \eqref{Lpr} are assumed throughout.
	
		\medskip
	
	Next we establish a generalization of the previous theorems. As we
	will see, as a by-product, we solve the natural problem arising
	from Theorem \ref{T:SV} and \ref{T:RV}, namely the
	missing (``critical'')
	case $\delta=-1$. Also some equations that do not have
	$\RV$ coefficients can be treated by generalized theorems.
	
	We distinguish the two cases, namely
	\begin{equation} \label{rdiv}
	\int_a^\infty r^{1-\beta}(s)\,\drm s=\infty
	\end{equation}
	and
	\begin{equation} \label{rconv}
	\int_a^\infty r^{1-\beta}(s)\,\drm s<\infty.
	\end{equation}
	In the former case we denote $R_D(t)=\int_a^t r^{1-\beta}(s)\,\drm
	s$ and in the latter case we denote $R_C(t)=\int_t^\infty
	r^{1-\beta}(s)\,\drm s$. Further, $R_D^{-1}$ stands for the inverse
	of $R_D$ and $Q^{-1}$ is the inverse of $Q$, where $Q=1/R_C$.
	It is easy to see that $(R_C\circ Q^{-1})(s)=1/s$.
	
	We first give a generalization of Theorem~\ref{T:RV}. Denote
	\begin{equation} \label{pD}
	\tau_D =R_D\circ\tau\circ R_D^{-1}, \ \ p_D=(pr^{\beta-1})\circ
	R_D^{-1},
	\end{equation}
	and
	$$
	q_{D}=((\tau'_D\circ
	R_D)R_D)^{\alpha-1}p.
	$$
	The following set of conditions will play a role:
\begin{equation}\label{tau2}
\limsup_{t\to\infty}\frac{t}{\tau_i(t)}<\infty,\ \tau_i'\in\SV,\
\limsup_{t\to\infty}\tau'_i(t)<\infty,
\end{equation}
where $i=D$ or $i=C$ according to whether $\delta<-1$ or
$\delta>-1$, respectively.

	\begin{theorem} \label{T:gen1}
		Assume that \eqref{rdiv} holds, $p_D\in\RV(-\alpha)$, and
		$\lim_{t\to\infty}L_{p_D}(t)=0$. Let \eqref{tau} and \eqref{tau2}
		with $i=D$ be fulfilled.
		Then
		for any $y\in\IS$ one has $y\circ R^{-1}_D\in\NRV(1)$ and the following hold.
		
		(i) If $\int_a^\infty q_{D}(s)\,\drm s=\infty$, then
		\begin{equation} \label{tF11}
		y(t)=R_D(t)\exp\left\{\int_a^t(1+o(1))(\beta-1)q_{D2}(s)\,\drm
		s\right\}
		\end{equation}
		as $t\to\infty$ and $y\in\IS_{\infty,\infty}$.
		
		(ii) If $\int_a^\infty q_{D}(s)\,\drm s<\infty$, then
		\begin{equation} \label{tF22}
		y(t)=A+\int_a^t
		M^{\beta-1}r^{1-\beta}(s)\exp\left\{-\int_s^\infty(1+o(1))(\beta-1)q_{D}(u)\,\drm
		u\right\}\,\drm s
		\end{equation}
		as $t\to\infty$, and $y\in\IS_{\infty,B}$, with
		$\lim_{t\to\infty}y^{[1]}(t)=M\in(0,\infty)$, for some $A\in\R$.
		In addition, $|y^{[1]}\circ R_D^{-1}-N|\in\SV$
		and
		\begin{equation} \label{trNo2}
		\frac{(\tau'_D(R_D(t)))^{\alpha-1}R_D^\alpha(t)p(t)r^{\beta-1}(t)}{M-y^{[1]}(t)}=o(1)
		\end{equation}
		as $t\to\infty$.
		
	\end{theorem}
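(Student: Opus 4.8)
The plan is to eliminate the leading coefficient $r$ by a change of the independent variable and thereby reduce the assertion to Theorem~\ref{T:RV} applied to an equation with unit leading coefficient. Since \eqref{rdiv} holds, $R_D(t)=\int_a^t r^{1-\beta}(s)\,\drm s$ is a $C^1$ increasing bijection of $[a,\infty)$ onto $[0,\infty)$ with $R_D'=r^{1-\beta}$, so for any solution $y$ of \eqref{e} defined for large $t$ the function $u:=y\circ R_D^{-1}$ is well defined for large $s$. Writing $t=R_D^{-1}(s)$ one has $\drm t/\drm s=r^{\beta-1}(t)$, hence $u'(s)=r^{\beta-1}(t)y'(t)$ and, using $(\alpha-1)(\beta-1)=1$ and $r>0$, $\Phi(u'(s))=r(t)\Phi(y'(t))=y^{[1]}(t)$; differentiating once more, $(\Phi(u'(s)))'=r^{\beta-1}(t)(y^{[1]})'(t)=r^{\beta-1}(t)p(t)\Phi(y(\tau(t)))$. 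Since $y(\tau(t))=u(R_D(\tau(R_D^{-1}(s))))=u(\tau_D(s))$ and $r^{\beta-1}(t)p(t)=p_D(s)$ (with $\tau_D,p_D$ as in \eqref{pD}), the function $u$ solves $(\Phi(u'))'=p_D(s)\Phi(u(\tau_D(s)))$, i.e.\ equation \eqref{e} with leading coefficient $\equiv1$, coefficient $p_D$ and delay $\tau_D$; conversely $y=u\circ R_D$ recovers a solution of \eqref{e}. As $r^{\beta-1}>0$, positivity and monotonicity are preserved, so $y\in\IS$ if and only if $u\in\IS$ for the transformed equation, and moreover $\lim_{t\to\infty}y(t)=\lim_{s\to\infty}u(s)$ and $\lim_{t\to\infty}y^{[1]}(t)=\lim_{s\to\infty}\Phi(u'(s))$.

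Next I would verify that the transformed equation meets the hypotheses of Theorem~\ref{T:RV}. Its leading coefficient $1$ lies in $\RV(0)$ and $p_D\in\RV(-\alpha)$, so \eqref{pr} holds with the index $\delta$ replaced by $\delta_D:=-\alpha$, and $\delta_D<-1$; the corresponding exponent \eqref{def-rho} is $\varrho_D=(-1-\delta_D)/(\alpha-1)=1$. For $\tau_D$: it is $C^1$, $\tau_D'>0$, and $\tau_D(s)\le s$ follows from $\tau(t)\le t$ and the monotonicity of $R_D$; condition \eqref{tau+} for $\tau_D$ is the first relation in \eqref{tau2} with $i=D$. Because the leading coefficient is constant, the first part of \eqref{tau-add} reads $\tau_D'\in\RV(\delta_D(1-\beta)-\beta)=\RV(0)=\SV$, which is the second relation in \eqref{tau2}, while the second part of \eqref{tau-add} reads $L_{p_D}^{\beta-1}(s)\tau_D'(s)\to0$, which follows from $L_{p_D}(s)\to0$ (hence $L_{p_D}^{\beta-1}\to0$, since $\beta-1>0$) together with $\limsup_{s\to\infty}\tau_D'(s)<\infty$, the third relation in \eqref{tau2}. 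Theorem~\ref{T:RV} then gives $u=y\circ R_D^{-1}\in\NRV(1)$; in case $\int_a^\infty H_{\tau_D}=\infty$ it yields \eqref{f11} (specialized to $r\equiv1$, $\varrho=1$) and $u\in\IS_{\infty,\infty}$ for the transformed equation, and in case $\int_a^\infty H_{\tau_D}<\infty$ it yields \eqref{f21}, $u\in\IS_{\infty,B}$ with $M:=\lim_{s\to\infty}\Phi(u'(s))\in(0,\infty)$, together with $M-\Phi(u')\in\SV$ and \eqref{LLM}.

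Finally I would transport these conclusions back to $t$. The key identity is $H_{\tau_D}(R_D(\xi))\,r^{1-\beta}(\xi)=q_D(\xi)$, immediate from $H_{\tau_D}(s)=(s\tau_D'(s))^{\alpha-1}p_D(s)$, $p_D(R_D(\xi))=p(\xi)r^{\beta-1}(\xi)$, and the definition of $q_D$; integrating after the substitution $\sigma=R_D(\xi)$, $\drm\sigma=r^{1-\beta}(\xi)\,\drm\xi$, it gives $\int^{R_D(t)}H_{\tau_D}(\sigma)\,\drm\sigma=\int^{t}q_D(\xi)\,\drm\xi$, so the two cases of Theorem~\ref{T:gen1} correspond exactly to those of Theorem~\ref{T:RV}. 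Inserting $y(t)=u(R_D(t))$ into \eqref{f11} and \eqref{f21} and performing this substitution produces \eqref{tF11} and \eqref{tF22}: one uses that $R_D(\xi)\to\infty$, so the $1+o(1)$ factors remain of that form, and that the constants arising from the lower limits of integration are harmless, being absorbed into the free constant $A$ in \eqref{tF22} and, in \eqref{tF11}, into the $o(1)$ term since there $\int_a^\infty q_D=\infty$. The memberships $y\in\IS_{\infty,\infty}$, resp.\ $y\in\IS_{\infty,B}$ with $\lim_{t\to\infty}y^{[1]}(t)=M$, follow from the corresponding facts for $u$ via $\lim y=\lim u$ and $\lim y^{[1]}=\lim\Phi(u')$; also $M-y^{[1]}\circ R_D^{-1}=M-\Phi(u')\in\SV$; and \eqref{trNo2} comes from \eqref{LLM} by the same substitution, using $\delta_D+\alpha=0$, $L_{r_D}\equiv1$, $L_{p_D}(R_D(t))=p_D(R_D(t))R_D^{\alpha}(t)=p(t)r^{\beta-1}(t)R_D^{\alpha}(t)$ and $\Phi(u'(R_D(t)))=y^{[1]}(t)$. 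The principal obstacle is the bookkeeping: checking that $(\tau_D,p_D)$ satisfies \eqref{tau}, \eqref{tau+} and \eqref{tau-add} precisely when \eqref{tau2} with $i=D$ holds, and carefully tracking the lower limits of integration and the $1+o(1)$ factors when pushing the asymptotic formulae through the change of variables.
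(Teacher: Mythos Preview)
Your proposal is correct and follows essentially the same strategy as the paper: perform the change of variable $s=R_D(t)$ (the paper packages this as Lemma~\ref{L:transf}), verify that the resulting equation with $\widehat r\equiv 1$, $\widehat p=p_D$, $\widehat\tau=\tau_D$ satisfies the hypotheses of Theorem~\ref{T:RV} with $\delta=-\alpha$ and $\varrho=1$, and then pull the conclusions and asymptotic formulae back via the substitution $\sigma=R_D(\xi)$, using the identity $H_{\tau_D}(R_D(\xi))\,r^{1-\beta}(\xi)=q_D(\xi)$. Your account is in fact somewhat more explicit than the paper's in checking \eqref{tau-add} for $\tau_D$ and in handling the $1+o(1)$ terms and integration limits, but the route is the same.
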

	
	The next result is a complement of Theorem~\ref{T:gen1}, and
	generalizes Theorem~\ref{T:SV}. Denote
	\begin{equation} \label{pC}
	\tau_C =Q\circ\tau\circ Q^{-1}, \ \ p_C=(R_C^2 pr^{\beta-1})\circ
	Q^{-1}
	\end{equation}
	and
	$$
	q_{C}=(R_Cpr^{\beta-1})^{\beta-1}.
	$$

	\begin{theorem} \label{T:gen2}
		Assume that \eqref{rconv} holds, $p_C\!\in\!\RV(\alpha-2)$, and
		$\lim_{t\to\infty}L_{p_C}(t)=0$. Let \eqref{tau} and \eqref{tau2}
		with $i=C$ be fulfilled.
		Then for any $y\in\IS$ one has $y\circ Q^{-1}\in\NSV$ and the
		following hold.
		
		(i) If $\int_a^\infty q_{C}(s)\,\drm s=\infty$, then
		\begin{equation} \label{tF1c}
		y(t)=\exp\left\{\int_a^t(1+o(1))(\beta-1)^{\beta-1}q_{C}(s)\,\drm
		s\right\}
		\end{equation}
		as $t\to\infty$ and $y\in\IS_{\infty,\infty}$.
		
		(ii) If $\int_a^\infty
		q_{C}(s)\,\drm s<\infty$, then
		\begin{equation} \label{tF2c}
		y(t)=N\exp\left\{-\int_t^\infty(1+o(1))(\beta-1)^{\beta-1}q_{C}(s)\,\drm
		s\right\}
		\end{equation}
		as $t\to\infty$ and $y\in\IS_{B,\infty}$, with
		$\lim_{t\to\infty}y(t)=N\in(0,\infty)$. In addition, $|y\circ Q^{-1}-N|\in\SV$ and
		\begin{equation} \label{trNo1c}
		\frac{R^\alpha_C(t)p(t)r^{\beta-1}(t)}{\Phi(y(t)-N)}=o(1)
		\end{equation}
		as $t\to\infty$.
	\end{theorem}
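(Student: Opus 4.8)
The plan is to eliminate the weight $r$ from \eqref{e} by a change of the independent variable, thereby reducing the statement to Theorem~\ref{T:SV} applied to a transformed equation. Since \eqref{rconv} holds, $R_C$ is positive, $C^1$ with $R_C'=-r^{1-\beta}$, and decreases to $0$, so $Q=1/R_C$ is an increasing $C^1$ bijection of $[a,\infty)$ onto $[Q(a),\infty)$ with $Q'=R_C^{-2}r^{1-\beta}>0$. First I would put $s=Q(t)$, $z(s)=y(Q^{-1}(s))$, and use the defining relations $R_CQ\equiv1$ (equivalently $(R_C\circ Q^{-1})(s)=1/s$), $p_C=(R_C^2pr^{\beta-1})\circ Q^{-1}$ and $\tau_C=Q\circ\tau\circ Q^{-1}$. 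Exploiting in addition $\Phi(ab)=\Phi(a)\Phi(b)$ and $(\alpha-1)(\beta-1)=1$, one computes $z^{[1]}(s):=s^{2(\alpha-1)}\Phi(z'(s))=y^{[1]}(Q^{-1}(s))$; differentiating the identity $z^{[1]}(Q(t))=y^{[1]}(t)$ in $t$ then shows that $z$ solves
\[
\bigl(s^{2(\alpha-1)}\Phi(z'(s))\bigr)'=p_C(s)\,\Phi\bigl(z(\tau_C(s))\bigr),
\]
which is again of the form \eqref{e}, with positive continuous coefficients $r_\ast(s)=s^{2(\alpha-1)}$, $p_\ast=p_C$, and delay $\tau_\ast=\tau_C$. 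Since $z>0$ and $z'=y'/Q'>0$ whenever $y\in\IS$, while $y=z\circ Q$ and $y^{[1]}=z^{[1]}\circ Q$, the substitution carries the class $\IS$ of \eqref{e} onto the class $\IS$ of the transformed equation, together with the subclasses $\IS_{\infty,\infty}$ and $\IS_{B,\infty}$.

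Next I would check the hypotheses of Theorem~\ref{T:SV} for the transformed equation. Here $r_\ast\in\RV(2\alpha-2)=\RV(\alpha+(\alpha-2))$ and $p_\ast=p_C\in\RV(\alpha-2)$ with $\alpha-2>-1$; thus \eqref{pr} holds in the equality case $\gamma=\alpha+\delta$, $\int_a^\infty p_C=\infty$ by Proposition~\ref{T:karam}(ii) and Proposition~\ref{P:RV}(vii), the function $G$ of the transformed equation equals $(s^{3-2\alpha}p_C(s))^{\beta-1}=s^{-1}L_{p_C}^{\beta-1}(s)\in\RV(-1)$, and, again by Proposition~\ref{T:karam}(ii), condition \eqref{trintp} for the transformed equation reduces to $L_{p_C}(s)\to0$, which is assumed. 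Moreover $\tau_C$ satisfies \eqref{tau} ($C^1$-smoothness, $\tau_C'>0$ and $\tau_C(s)\le s$ follow from \eqref{tau} for $\tau$ and the monotonicity of $Q$) and \eqref{tau+}, since $\limsup_ss/\tau_C(s)=\limsup_tR_C(\tau(t))/R_C(t)<\infty$ is the first requirement in \eqref{tau2} with $i=C$. Hence Theorem~\ref{T:SV}(I) applies: every member of the class $\IS$ of the transformed equation, in particular $z=y\circ Q^{-1}$, lies in $\NSV$ — which is the first assertion — and, $p_C$ being regularly varying of index $>-1$, the dichotomy of Theorem~\ref{T:SV}(I)(i)--(ii) places $z$ in $\IS_{\infty,\infty}$ when $\int^\infty$ of the transformed $G$ diverges and in $\IS_{B,\infty}$ when it converges, and supplies formula \eqref{f1}, resp.\ \eqref{f2}, for $z$.

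Finally I would translate the conclusions back. With $\sigma=Q(\xi)$, $\drm\sigma=Q'(\xi)\,\drm\xi=R_C^{-2}(\xi)r^{1-\beta}(\xi)\,\drm\xi$, and using $L_{p_C}(Q(\xi))=R_C^{\alpha}(\xi)p(\xi)r^{\beta-1}(\xi)$, the integral $\int^\infty$ of the transformed $G$ turns into $\int^\infty q_C$ (this computation is precisely what fixes the expression for $q_C$ and produces the constant $(\beta-1)^{\beta-1}=(\alpha-1)^{1-\beta}=1/\Phi^{-1}(\alpha-1)$), so the two dichotomies coincide and \eqref{f1}, \eqref{f2} for $z(Q(t))=y(t)$ become \eqref{tF1c}, \eqref{tF2c}. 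In case (ii) the transformed equation also satisfies the hypothesis $r_\ast\in\RV(\gamma)$ of Theorem~\ref{T:SV}(II) in the equality case $\gamma=\alpha+\delta$, with $L_{r_\ast}\equiv1$; Theorem~\ref{T:SV}(II) then gives, via \eqref{N-y} and \eqref{LLN}, that $N-z\in\RV((1-\alpha)(\beta-1)+1)=\SV$ (so $|y\circ Q^{-1}-N|\in\SV$) and that $(R_C^{\alpha}(t)p(t)r^{\beta-1}(t))^{\beta-1}=o(N-y(t))$; raising this to the power $\alpha-1$ and using $\Phi(y(t)-N)=-(N-y(t))^{\alpha-1}$ yields \eqref{trNo1c}.

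The conceptual step — identifying $s=Q(t)$ as the right substitution and reducing to Theorem~\ref{T:SV} — is transparent; the main obstacle is the exponent bookkeeping. One must repeatedly exploit $(\alpha-1)(\beta-1)=1$ and $\alpha\beta=\alpha+\beta$ to verify that $s^{2(\alpha-1)}$ is the correct transformed weight, that the transformed $G$ lies in $\RV(-1)$, that $L_{p_C}\circ Q$ has the asserted closed form, and that the $\RV$-indices occurring in \eqref{tF1c}--\eqref{trNo1c} collapse to the claimed values. A secondary point, to be verified from $Q'>0$, is that the substitution maps $\IS$ and its relevant subclasses of \eqref{e} onto those of the transformed equation. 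No analytic difficulty arises beyond what is already present in the proof of Theorem~\ref{T:SV}.
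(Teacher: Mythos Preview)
Your proposal is correct and follows essentially the same route as the paper: the substitution $s=Q(t)$, $x(s)=y(t)$ (the paper's Lemma~\ref{L:transf} with $\varphi=Q$) turns \eqref{e} into an equation of the same form with $\widehat r(s)=s^{2\alpha-2}$, $\widehat p=p_C\in\RV(\alpha-2)$, $\widehat\tau=\tau_C$, and then Theorem~\ref{T:SV} (parts (I) and (II), equality case $\gamma=\alpha+\delta$) is applied to the transformed equation and the conclusions are pulled back via $\sigma=Q(\xi)$. The paper's own proof is in fact only a two-sentence sketch (``the details are left to the reader''), so your write-up supplies exactly the omitted verifications --- the exponent bookkeeping, the check that $\tau_C$ satisfies \eqref{tau}--\eqref{tau+}, and the identification of the transformed $G$ with $(\beta-1)^{\beta-1}q_C$ after the change of variables.
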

	
	\begin{remark} \label{R:tau}
		In view of Lemma~\ref{L:tau}-(ii), if $\tau$ satisfies \eqref{tau}, \eqref{tau+}, $\tau'\in\SV$,
		$\limsup_{t\to\infty}\tau'(t)<\infty$, and
		$r\in\RV(\delta+\alpha)$, then \eqref{tau2} (which is assumed in Theorems~\ref{T:gen1} and \ref{T:gen2}) is fulfilled,
		where $i=D$ or $i=C$ according to whether $\delta<-1$ or
		$\delta>-1$, respectively.
	\end{remark}

		Theorems \ref{T:gen1} and \ref{T:gen2} are indeed generalizations
		of the previous theorems. As easily seen from Lemma~\ref{L:tau} (see also Remark~\ref{R:tau}), the assumptions of Theorem~\ref{T:RV}
		and Theorem~\ref{T:SV} (supposing here \eqref{pr})
		imply the ones of Theorem~\ref{T:gen1} and Theorem~\ref{T:gen2}, respectively. Asymptotic formulae in
		Theorems \ref{T:SV} and \ref{T:RV} can be obtained from the general ones by applying the Karamata integration theorem (Proposition~\ref{T:karam}) to $R_D$ and $R_C$.

\medskip

 Theorem~\ref{T:gen1} and Theorem~\ref{T:gen2} allow us to obtain asymptotic formulae
		also when the coefficients of the equation  are not regularly varying.
		For example, let $p(t)=e^{\gamma t}t^\omega$, $r(t)=e^{\gamma t}$
		with $\gamma<0$ and $\omega<0$. Then \eqref{rdiv} holds and
		$$
		 p_D(s)=\frac{1}{((s+K)\gamma(1-\beta))^\alpha}\ln^\omega\left((s+K)\gamma(1-\beta)\right)^{\frac{1}{\gamma(1-\beta)}}
		$$
		for some $K\in(0,\infty)$. Therefore, $p_D\in\RV(-\alpha)$ and
		$L_{p_D}(s)\to 0$ as $s\to\infty$.

\medskip

 Theorem~\ref{T:gen1} and Theorem~\ref{T:gen2}  enable us to
		cover the missing cases in Theorems \ref{T:SV} and \ref{T:RV} in the sense of the critical case $\delta=-1$. Let us illustrate this
		fact if, for simplicity, $r(t)=t^{\alpha-1}$.
		Assume, in addition, that $p\in\RV(-1)$. Then \eqref{pr} with
		$\delta=-1$ holds. It is easy to see that \eqref{rdiv} is
		fulfilled. We have $R_D(t)=\ln t$ (taking $a=1$) and
		$R_D^{-1}(s)=\exp s$. Since
		$p_D(s)=p(e^s)(e^s)^{(\alpha-1)(\beta-1)}=e^sp(e^s)=L_p(e^s)$, we
		get $p_D\in\RV(-\alpha)$ provided
		\begin{equation} \label{to1}
		L_p\circ\exp\in\RV(-\alpha).
		\end{equation}
		Under this condition, $L_{p_D}(\ln t)=s^\alpha p_D(s)=s^\alpha
		L_p(e^s)=L_p(t)\ln^\alpha t$ (with $t=e^s$). Hence,
		$\lim_{t\to\infty}L_{p_D}(t)=0$ is implied by
		\begin{equation} \label{to2}
		\lim_{t\to\infty}L_p(t)\ln^\alpha t =0.
		\end{equation}
		Thus the assumptions of Theorem~\ref{T:gen1} are fulfilled
		provided \eqref{to1} and \eqref{to2} hold, and we get
		$y\circ\exp\in\NRV(1)$ for
		$y\in\IS$; in fact, all eventually positive increasing solutions of \eqref{e} are slowly
		varying. As for the asymptotic formulae,
		note that
		$$
		q_{D}(t)=\left(\frac{t\tau'(t)}{\tau(t)}\right)^{\alpha-1}p(t)\ln^{\alpha-1}s.
		$$
		Take, for example,
		$$
		p(t)=\frac{1}{t}\cdot\frac{1}{\ln^\alpha t}\cdot\frac{1}{\ln^\omega\ln t},
		$$
		$\omega>0$.
		Then
		$$
		(L_p\circ\exp)(t)=\frac{1}{t^\alpha\ln^{\omega}t}\in\RV(-\alpha).
		$$
		Assume $s\tau'(s)\asymp \tau(s)$ as $s\to\infty$.
		Then
		$$
		q_{D}(t)=\left(\frac{t\tau'(t)}{\tau(t)}\right)^{\alpha-1}\frac{\ln^{-\omega}\ln t}{t\ln t}
		\asymp\frac{\ln^{-\omega}\ln t}{t\ln t}
		$$
		as $t\to\infty$.
		Hence,
		$\int_a^\infty q_{D}$ diverges [converges] when $\omega\le 1$ [$\omega>1$].
		
\medskip

Many formulae in the previous two theorems (and their applications)
		can be expressed in terms of generalized regularly varying
		functions; this concept was introduced in \cite{jaros-kusano} and has been used in several papers, see, for instance, \cite{tanigawa2012}.
		Generalized regular variation is defined as follows: a function $f$ is
	\textit{regularly varying of index $\vartheta\in\R$ with respect to} $\omega\in C^1$, with $\omega'>0$ and $\lim_{t \to \infty}\omega(t)=\infty$, if $f\circ \omega^{-1}\in\RV(\vartheta)$. Denote the set of all regularly varying function of index $\vartheta$ with respect to $\omega$ by $\RV_\omega(\vartheta)$.
	Hence, for example, instead of $y\circ R_D^{-1}\in\NRV(1)$ we could
		write $y\in\NRV_{R_D}(1)$, instead of $p_D\in\RV(-\alpha)$ we could write
		$pr^{1-\beta}\in\RV_{R_D}(-\alpha)$, and so on.

\medskip
	
		If $\tau(t)=t$, then Theorem~\ref{T:gen1}
		(Theorem~\ref{T:gen2}) reduces to \cite[Theorem~5]{pr-asymp}
		(\cite[Theorem~6]{pr-asymp}).

\medskip
In the last paragraph of this section
 we indicate some directions for a possible
		future research. 	
		
		(i) One of the open problems is to obtain a similar theory for
		equation \eqref{e} with the opposite sign condition on the
		coefficient $p$. In this framework, it would be new even in the
		ordinary (i.e., non-functional) case.
		
		(ii)
		A natural problem is to complete our theory also for decreasing solutions.
		As we could see in the observation after formula \eqref{trp1}, the difference between functional and non-functional case is ``bigger'' than for increasing solutions. We conjecture that, for example, $\int_a^\infty r^{1-\beta}(s)\,\drm s=\infty$, $p\in\RV(\delta)$ with $\delta<-1$, and \eqref{trp1} imply $\DS\subset\NSV$; the assumption on regular variation of $p$ (or some other restriction on $p$) cannot be omitted. Once we have guaranteed slow variation of decreasing solutions, asymptotic formulae similar to the above ones can be obtained.
		
		(iii)
		Another natural problem is to examine advanced equations, i.e., the case in which the condition $\tau(t)\le t$ is replaced by $\tau(t)\ge t$. Here the situation is ``reversed'' in the sense that  decreasing solutions are easier to be handled than  increasing ones. For instance, it is not difficult to show that  $\int_a^\infty r^{1-\beta}(s)\,\drm s=\infty$, $p\in\RV(\delta)$ with $\delta<-1$, and \eqref{trp1} (i.e., the example of the setting is the same as in the previous item) imply $\DS\subset\NSV$. Such a statement would be a half-linear extension of  \cite[Theorem~5.1]{grimm-hall} which deals with linear advanced equations. To the best of authors' knowledge, this paper is the first one where functional differential equations are analyzed in the framework of regular variation.
		As for the corresponding result for an advanced equation when \eqref{trintp} with $\delta>-1$ holds, nothing is known about slow variation of all increasing solutions
		even in the linear case.
		On the other hand, practically all arguments which are used to obtain asymptotic formulae, work with appropriate modifications also for advanced equations.
		
		(iv)
		Since we assume that ${L_p(t)}/{L_r(t)}={t^\alpha p(t)}/{r(t)}$  tends to zero as $t \to \infty$,
		(i.e., condition \eqref{Lpr}), it is natural to examine also the
		condition $\lim_{t\to\infty}{t^\alpha p(t)}/{r(t)}$ $=C>0$ or its generalization in the sense of \eqref{trintp} or \eqref{trp1}.
		In the case of  equations without deviating argument, see \cite{pr-asymp},
		we can use suitable transformations which lead to a linear second order equation.
Then the results on $\SV$ solutions can be applied to the transformed equation.
		A similar method however is not known for the associated
		functional differential equations (even in the linear case).

	\section{Proofs of the main results}
	
In order to prove the main theorems, first we derive several auxiliary statements.	
	
	\begin{proof}[Proof of Proposition~\ref{P:RV}-(ix)]
		(ix) If $f'\in\RV(-1)$ and $f(t)\to \infty$ as $t\to\infty$, then $f(t)=f(a)+\int_a^t f'(s)\,\drm s\sim\int_a^t f'(s)\,\drm s$. Hence, since $t f'(t) \in \SV$,
		$$tf'(t)/f(t)\sim tf'(t)/\int_a^t f'(s)\,\drm s
\to 0$$ as $t\to\infty$ by Proposition~\ref{T:karam}-(iii). Similarly we proceed when $|f'|\in\RV(-1)$ and $f(t)\to 0$ as $t\to\infty$. As for the case $\vartheta\ne-1$, we use again similar arguments and apply Proposition~\ref{T:karam}-(i) when
		$\vartheta<-1$, or Proposition~\ref{T:karam}-(ii)  when $\vartheta>-1$.
	\end{proof}

	\begin{lemma} \label{L:fg} Let $f,g$ be defined on $[a,\infty)$.
		
		(i) If $f\in\SV$ and $g(t)\asymp t$ as $t\to\infty$, then
		$f(g(t))\sim f(t)$ and $f\circ g\in\SV$.
		
		(ii) If $f\in\RV(\vartheta)$, $\vartheta\in\R$, and $g(t)\asymp t$
		as $t\to\infty$, then $f(g(t))\asymp f(t)$.
		
		(iii) If $f\in\RV(\vartheta)$, $\vartheta\in\R$, and $g(t)\sim t$
		as $t\to\infty$, then $f(g(t))\sim f(t)$.
	\end{lemma}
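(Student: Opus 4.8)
The plan is to derive all three parts from the Uniform Convergence Theorem for regularly varying functions, combined with the representation $f(t)=t^{\vartheta}L_f(t)$ with $L_f\in\SV$.

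I would begin with (i). Since $g(t)\asymp t$, there are constants $0<c_1\le c_2<\infty$ with $\lambda(t):=g(t)/t\in[c_1,c_2]$ for all large $t$ (in particular $g(t)\to\infty$). Writing $f(g(t))/f(t)=f(\lambda(t)t)/f(t)$ and applying the Uniform Convergence Theorem to $f\in\SV=\RV(0)$, the ratio $f(\lambda t)/f(t)$ tends to $1$ as $t\to\infty$, uniformly for $\lambda$ in the compact set $[c_1,c_2]$; as $\lambda(t)$ eventually stays in $[c_1,c_2]$, this forces $f(g(t))/f(t)\to1$, that is $f(g(t))\sim f(t)$. To get $f\circ g\in\SV$, note that $f\circ g$ is eventually positive (because $f>0$) and measurable (as holds throughout our applications, where $g$ is continuous), and that any positive measurable $h$ with $h\sim f$ and $f\in\SV$ is slowly varying: for fixed $\lambda>0$,
$$
\frac{h(\lambda t)}{h(t)}=\frac{h(\lambda t)}{f(\lambda t)}\cdot\frac{f(\lambda t)}{f(t)}\cdot\frac{f(t)}{h(t)}\longrightarrow 1
$$
as $t\to\infty$, since $h\sim f$ and $f\in\SV$. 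Taking $h=f\circ g$ completes (i).

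For (ii) I would use $f(t)=t^{\vartheta}L_f(t)$, $L_f\in\SV$, so that $f(g(t))=g(t)^{\vartheta}L_f(g(t))$. From $c_1t\le g(t)\le c_2t$ one has $g(t)^{\vartheta}\asymp t^{\vartheta}$ (with constants $\min\{c_1^{\vartheta},c_2^{\vartheta}\}$ and $\max\{c_1^{\vartheta},c_2^{\vartheta}\}$), while $L_f(g(t))\sim L_f(t)$ by part (i); multiplying gives $f(g(t))\asymp t^{\vartheta}L_f(t)=f(t)$. Part (iii) is the same computation with $\asymp$ replaced by $\sim$: since $g(t)\sim t$ we have $g(t)^{\vartheta}\sim t^{\vartheta}$, and $g(t)\sim t$ implies $g(t)\asymp t$, so part (i) applies and yields $L_f(g(t))\sim L_f(t)$; hence $f(g(t))\sim f(t)$. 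Equivalently one may argue directly: $\lambda(t)=g(t)/t\to1$ eventually lies in the fixed compact set $[1/2,2]$, on which $f(\lambda t)/f(t)\to\lambda^{\vartheta}$ uniformly by the Uniform Convergence Theorem, and $\lambda(t)^{\vartheta}\to1$, so $f(g(t))/f(t)=f(\lambda(t)t)/f(t)\to1$.

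The only step calling for a little care is the invocation of the Uniform Convergence Theorem, which must be carried out on a compact $\lambda$-set provably containing $g(t)/t$ for all large $t$ — namely $[c_1,c_2]$ in parts (i)--(ii), and an arbitrary fixed neighbourhood of $1$ (say $[1/2,2]$) in part (iii). Once this is set up, the rest is routine manipulation of the relations $\sim$ and $\asymp$, so I do not anticipate any genuine obstacle.
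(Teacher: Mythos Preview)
Your proof is correct and follows essentially the same route as the paper: the Uniform Convergence Theorem on a compact $\lambda$-set for (i), and the decomposition $f(t)=t^{\vartheta}L_f(t)$ with $L_f\in\SV$ reducing (ii) and (iii) to (i). The only cosmetic difference is that for $f\circ g\in\SV$ the paper writes the ratio $f(g(\lambda t))/f(g(t))$ directly, whereas you invoke the general fact that $h\sim f\in\SV$ implies $h\in\SV$; these are the same computation.
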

	
	\begin{proof}
		(i) Since $c\le g(t)/t\le d$, $t\in[b,\infty)$, for some
		$0<c<d<\infty$ and $b \geq a$, in view of the Uniform Convergence
		Theorem, we have
		$$
		\frac{f(g(t))}{f(t)}=\frac{f((g(t)/t)t)}{f(t)}\to 1
		$$
		as $t\to\infty$, i.e., $f(g(t))\sim f(t)$. Therefore, for every $\lambda>0$,
		$$
		\frac{f(g(\lambda t))}{f(g(t))}\sim\frac{f(\lambda t)}{f(t)}\sim 1
		$$
		as $t\to\infty$, i.e., $f \circ g\in\SV$.
		
		(ii) Let $L(t)=f(t)/t^\vartheta$. Then $L\in\SV$ and
		\begin{equation} \label{fgf}
		\frac{f(g(t))}{f(t)}=\left(\frac{g(t)}{t}\right)^\vartheta\frac{L(g(t))}{L(t)},
		\end{equation}
		In view of (i), we have
		$$
		c_1\le c_2\frac{L(g(t))}{L(t)}\le\frac{f(g(t))}{f(t)}\le
		d_2\frac{L(g(t))}{L(t)}\le d_1,
		$$
		$t\in[b,\infty)$, for some $c_1,c_2,d_1,d_2\in(0,\infty)$ and $b \geq a$.
		
		(iii) As above,  let $L(t)=f(t)/t^\vartheta$. Then $L\in\SV$ and,
		in view of \eqref{fgf} and (i), we have
		$$
		\frac{f(g(t))}{f(t)}\sim\frac{L(g(t))}{L(t)}\sim 1
		$$
		as $t\to\infty$.
	\end{proof}
	
	\begin{lemma} \label{L:tau}
		Let $\tau_i,p_i$, $i=C,D$, are defined as in \eqref{pD} and
		\eqref{pC}.
		
		(i) If \eqref{pr} and \eqref{Lpr} hold, then $p_D\in\RV(-\alpha)$
		and $L_{p_D}(s)\to 0$ as $s\to\infty$ provided $\delta<-1$, while
		$p_C\in\RV(\alpha-2)$ and $L_{p_C}(s)\to 0$ as $s\to\infty$
		provided $\delta>-1$.
		
		(ii) Let $\tau$ satisfy \eqref{tau}. Then $\tau_i$ fulfills
		\begin{equation}\label{tau1}
		\tau_i\in C^1,\ \tau'_i>0,\ \tau_i(t)\le t,
		\end{equation}
		where $i=D$ or $i=C$ according to whether \eqref{rdiv} or
		\eqref{rconv} holds, respectively. If, in addition, \eqref{tau+} holds, $\tau'\in\SV$,
		$\limsup_{t\to\infty}\tau'(t)<\infty$, and
		$r\in\RV(\delta+\alpha)$, then \eqref{tau2} holds,
		where $i=D$ or $i=C$ according to whether $\delta<-1$ or
		$\delta>-1$, respectively.
	\end{lemma}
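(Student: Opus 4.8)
The plan is to handle the cases $\delta<-1$ and $\delta>-1$ in parallel, writing $\varrho$ as in \eqref{def-rho}. The first thing I would record is that $r^{1-\beta}\in\RV((\delta+\alpha)(1-\beta))$ by Proposition~\ref{P:RV}-(ii), that $(\delta+\alpha)(1-\beta)=\varrho-1$, and hence — by the Karamata integration theorem (Proposition~\ref{T:karam}) — that $\int_a^\infty r^{1-\beta}$ diverges iff $\delta\le-1$. Consequently, whenever $r\in\RV(\delta+\alpha)$ (which is in force in part~(i) via \eqref{pr} and in the second assertion of part~(ii)), the value $\delta<-1$ puts us in regime \eqref{rdiv}, where $R_D,\tau_D,p_D$ live, and $\delta>-1$ puts us in regime \eqref{rconv}, where $R_C,Q,\tau_C,p_C$ live.

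For part~(i) I would first pin down the indices. Proposition~\ref{T:karam}-(ii) applied to $r^{1-\beta}\in\RV(\varrho-1)$ (index $>-1$ when $\delta<-1$) gives $R_D(t)\sim\varrho^{-1}t^{\varrho}L_r^{1-\beta}(t)$, so $R_D\in\RV(\varrho)$ and $R_D^{-1}\in\RV(1/\varrho)$ by the inversion property of regularly varying functions; symmetrically Proposition~\ref{T:karam}-(i) gives $R_C(t)\sim(-\varrho)^{-1}t^{\varrho}L_r^{1-\beta}(t)$, $R_C\in\RV(\varrho)$ with $\varrho<0$, whence $Q=1/R_C\in\RV(-\varrho)$ and $Q^{-1}\in\RV(-1/\varrho)$. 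Then Proposition~\ref{P:RV}-(ii),(iii),(v) yields $pr^{\beta-1}\in\RV(-\alpha\varrho)$ and $R_C^2pr^{\beta-1}\in\RV((2-\alpha)\varrho)$, so that $p_D=(pr^{\beta-1})\circ R_D^{-1}\in\RV(-\alpha)$ and $p_C=(R_C^2pr^{\beta-1})\circ Q^{-1}\in\RV(\alpha-2)$. For $L_{p_i}\to0$ I would substitute $t=R_D^{-1}(s)$, resp. $t=Q^{-1}(s)$, which turns the claim into $L_{p_D}(R_D(t))=R_D^\alpha(t)\,p(t)r^{\beta-1}(t)\to0$, resp. $L_{p_C}(Q(t))=R_C^\alpha(t)\,p(t)r^{\beta-1}(t)\to0$ (using $Q=1/R_C$); plugging in the Karamata asymptotics for $R_D$, resp. $R_C$, the power of $t$ has exponent $\alpha\varrho+\delta+(\delta+\alpha)(\beta-1)=0$ and the slowly varying parts combine to $L_r^{\alpha(1-\beta)+\beta-1}=L_r^{-1}$, leaving $L_{p_D}(R_D(t))\sim\varrho^{-\alpha}L_p(t)/L_r(t)$ and $L_{p_C}(Q(t))\sim(-\varrho)^{-\alpha}L_p(t)/L_r(t)$, both of which tend to $0$ by \eqref{Lpr}.

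For part~(ii), the relations \eqref{tau1} are quick: whichever of \eqref{rdiv}, \eqref{rconv} holds, the associated $R_D$ or $Q$ is a $C^1$ strictly increasing bijection of $[a,\infty)$ with a $C^1$ strictly increasing inverse, so $\tau_i$ is a composition of $C^1$ maps with positive derivatives, hence in $C^1$ with $\tau_i'>0$, and applying the increasing function $R_D$ (resp. $Q$) to $\tau(R_D^{-1}(t))\le R_D^{-1}(t)$ (resp. $\tau(Q^{-1}(t))\le Q^{-1}(t)$) gives $\tau_i(t)\le t$. For the second assertion, the crucial observation is that $\tau'\in\SV$ forces $\tau\in\NRV(1)$: since $\int_a^t\tau'\sim t\tau'(t)$ by Proposition~\ref{T:karam}-(ii) and $t\tau'(t)\to\infty$ by Proposition~\ref{P:RV}-(vii), we get $\tau(t)\sim t\tau'(t)$, i.e. $t\tau'(t)/\tau(t)\to1$; together with \eqref{tau+} and $\tau(t)\le t$ this also gives $\tau(t)\asymp t$. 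Now I would write $u=R_D^{-1}(t)$ and $\tau_D(t)=R_D(\tau(u))$: since $R_D\in\RV(\varrho)$ and $\tau(u)\asymp u$, Lemma~\ref{L:fg}-(ii) gives $\tau_D(t)\asymp R_D(u)=t$, i.e. $\limsup_{t\to\infty}t/\tau_D(t)<\infty$; differentiating, $\tau_D'(t)=\bigl(r^{1-\beta}(\tau(u))/r^{1-\beta}(u)\bigr)\tau'(u)$, which is bounded for large $t$ by Lemma~\ref{L:fg}-(ii) and $\limsup\tau'<\infty$, and lies in $\SV$ because $(r^{1-\beta}\circ\tau)/r^{1-\beta}\in\SV$ (a quotient of two functions in $\RV(\varrho-1)$, using $\tau\in\NRV(1)$ and Proposition~\ref{P:RV}-(iii),(v)) while $\tau'\in\SV$, so their product composed with $R_D^{-1}\in\RV(1/\varrho)$ has index $0$. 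The case $i=C$ runs the same way with $R_D,R_D^{-1},r^{1-\beta},\varrho$ replaced by $Q,Q^{-1},Q'=r^{1-\beta}/R_C^2\in\RV(-\varrho-1),-\varrho$.

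The main obstacle I anticipate is exactly the slow variation of $\tau_i'$ in part~(ii): elementary estimates only yield $\tau_i'\asymp1$, and upgrading this to membership in $\SV$ genuinely needs the identity $\tau(t)\sim t\tau'(t)$ (hence $\tau\in\NRV(1)$), which lets one recognize $r^{1-\beta}(\tau(t))/r^{1-\beta}(t)$ as a ratio of regularly varying functions of equal index. Everything else is index bookkeeping with Propositions~\ref{T:karam} and \ref{P:RV} and Lemma~\ref{L:fg}.
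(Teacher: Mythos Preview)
Your proposal is correct and follows essentially the same route as the paper: for part~(i) the paper merely records the key relation $L_p(t)/L_r(t)\sim\varrho^\alpha(L_{p_D}\circ R_D)(t)$ and leaves the index bookkeeping to the reader, which is exactly what you carry out; for part~(ii) the paper argues just as you do (composition of $C^1$ increasing maps for \eqref{tau1}, $R_D\in\RV(\varrho)$ plus $\tau(t)\asymp t$ for $\limsup t/\tau_D<\infty$, the chain-rule factorization $\tau_D'=(R_D'\circ\tau\circ R_D^{-1})(\tau'\circ R_D^{-1})(R_D^{-1})'$ with index count summing to $0$ for $\tau_D'\in\SV$, and Lemma~\ref{L:fg}-(ii) for the boundedness of $\tau_D'$), the only cosmetic difference being that the paper multiplies three $\RV$ factors while you first form the ratio $r^{1-\beta}(\tau(u))/r^{1-\beta}(u)\in\SV$.
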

	
	\begin{proof}
		(i) The statement follows from Proposition~\ref{P:RV}, the
		Karamata integration theorem (Proposition~\ref{T:karam}), and the fact
		that $L_p(t)/L_r(t)\sim\varrho^\alpha(L_{p_D}\circ R_D)(t)$ as
		$t\to\infty$ provided $\delta<-1$ (the case $\delta>-1$ is
		similar). The details are left to the reader.
		
		(ii) We will consider only the case $i=D$, since the case $i=C$ is
		similar. Since $\tau, R_D, R_D^{-1}$ are $C^1$ functions with
		positive derivatives, $\tau_D\in C^1$ and $\tau_D'>0$ follow.
		Further, from $\tau(t)\le t$, we have $\tau(R_D^{-1}(s))\le
		R^{-1}_D(s)$, and since $R_D$ is increasing, we obtain $\tau_D(s)\le
		s$. From now on assume that $r\in\RV(\delta+\alpha)$. Then
		$R_D\in\RV(\varrho)$ by Proposition~\ref{P:RV} and the Karamata
		theorem (Proposition~\ref{T:karam}). If  $M>0$ exists, such that $t\le
		M\tau(t)$, then $R^{-1}_D(s)\le M\tau(R_D^{-1}(s))$. Consequently,
		$s\le R_D(M\tau(R^{-1}_D(s)))\le N\tau_D(s)$, for some $N>0$,
		where the existence of $N$ is guaranteed by regular variation of
		$R_D$. Hence, $\limsup_{t\to\infty}t/\tau_D(t)<\infty$ follows.
		Since $\tau'\in\SV$ and $R_D\in\RV(\varrho)$, we have
		$\tau\in\RV(1)$ and $R_D^{-1}\in\RV(1/\varrho)$. Thus, in view of
		Proposition~\ref{P:RV}, $$
		\begin{aligned}
		\tau'_D&=(R'_D\circ\tau\circ
		R^{-1}_D)(\tau'\circ R^{-1})(R^{-1})'\\
		&\in\RV\left((\varrho-1)\cdot
		1\cdot
		\frac{1}{\varrho}+0\cdot\frac{1}{\varrho}+\left(\frac{1}{\varrho}-1\right)\right)=\SV.
		\end{aligned}
		$$
		Finally, assume that $\tau'(t)\le M$ for some $M>0$. In view of
		Lemma~\ref{L:fg}, we have
		$r^{1-\beta}(\tau(t))/r^{1-\beta}(t)\asymp 1$ as $t\to\infty$.
		Hence,
		$$
		\tau'_D(s)=r^{1-\beta}(\tau(R^{-1}_D(s)))\tau'(R^{-1}_D(s))\frac{1}{r^{1-\beta}(R^{-1}_D(s))}\le
		N
		$$
		for some $N>0$.

	\end{proof}

	\begin{lemma} \label{L:rec}
		Let $y$ be a solution of \eqref{e} and let $\tau$ be differentiable. Then $u=Cr\Phi(y')$, $C\in\R$, satisfies the reciprocal equation
		\begin{equation} \label{re}
		(\widetilde r(t)\Phi^{-1}(u'))'=\widetilde p(t)\Phi^{-1}(u(\tau(t))),
		\end{equation}
		where
		\begin{equation} \label{re-coef}
		\widetilde r(t)=p^{1-\beta}(t),\quad  \widetilde
		p(t)=\tau'(t)r^{1-\beta}(\tau(t)).
		\end{equation}
		If $p\in\RV(\delta)$ and
		$(r^{1-\beta}\tau'\circ\tau)\in\RV(\widetilde\delta)$ (where the
		latter condition is implied e.g. by $r\in\RV(\delta+\alpha)$ and
		$\tau'\in\SV$), then $\widetilde p\in\RV(\widetilde\delta)$,
		$\widetilde r\in\RV(\widetilde\delta+\beta)$, where
		$\widetilde\delta=\delta(1-\beta)-\beta$. If, moreover, \eqref{pr}
		holds, then
		\begin{equation} \label{tildeLL}
		\frac{L_{\widetilde p}(t)}{L_{\widetilde
				r}(t)}\asymp\left(\frac{L_p(t)}{L_r(t)}\right)^{\beta-1}\tau'(t)
		\end{equation}
		as $t\to\infty$.
	\end{lemma}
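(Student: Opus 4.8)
The plan is to split the statement into three independent parts and verify each by direct computation together with the regular variation calculus already assembled (Propositions~\ref{P:RV} and \ref{T:karam}, and Lemma~\ref{L:fg}). First I would establish the reciprocal equation \eqref{re}. Starting from $y$ a solution of \eqref{e}, set $u=Cr\Phi(y')$. Then $u/C=r\Phi(y')$, so $\Phi^{-1}(u/C)=r^{1-\beta}\cdot|y'|\cdot\ldots$; more precisely, since $\Phi^{-1}=\Phi_\beta$ and $\Phi^{-1}(r\Phi(y'))=r^{1-\beta}y'$ (using $\Phi^{-1}\circ\Phi=\mathrm{id}$ and the homogeneity $\Phi^{-1}(ab)=\Phi^{-1}(a)\Phi^{-1}(b)$ valid because $\Phi^{-1}(x)=|x|^{\beta-1}\sgn x$), I get $\Phi^{-1}(u')$ after differentiating... actually the cleaner route is: from $u=Cr\Phi(y')$ one has $C^{1-\beta}\Phi^{-1}(u)=r^{1-\beta}y'$, hence $y'=C^{\beta-1}p^{1-\beta}\cdot\widetilde r\,\Phi^{-1}(u)/\ldots$ — I would instead differentiate $\Phi^{-1}(u)$ directly. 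The key identity is that \eqref{e} rewritten reads $u'=Cp\Phi(y(\tau(t)))$, so $\Phi^{-1}(u'/C)=p^{1-\beta}\,y(\tau(t))$, i.e. $y(\tau(t))=\widetilde r(t)\Phi^{-1}(u'(t))$ up to the constant $C$; differentiating this in $t$ and using $y'=C^{\beta-1}r^{1-\beta}\Phi^{-1}(u)$ evaluated at $\tau(t)$, together with the chain rule factor $\tau'(t)$, yields $(\widetilde r(t)\Phi^{-1}(u'))'=\tau'(t)r^{1-\beta}(\tau(t))\Phi^{-1}(u(\tau(t)))$ up to matching powers of $C$, which is exactly \eqref{re} with the coefficients \eqref{re-coef}. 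Bookkeeping of the constant $C$ through the $\beta-1$ powers is the one place to be careful, but $C\in\R$ arbitrary is precisely what makes it work out.

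Second, I would verify the regular variation indices. Assuming $p\in\RV(\delta)$, Proposition~\ref{P:RV}-(ii) gives $p^{1-\beta}\in\RV(\delta(1-\beta))$; but $\widetilde r=p^{1-\beta}$ and $\widetilde\delta+\beta=\delta(1-\beta)-\beta+\beta=\delta(1-\beta)$, so $\widetilde r\in\RV(\widetilde\delta+\beta)$ as claimed. For $\widetilde p$: by hypothesis $(r^{1-\beta}\circ\tau)\tau'\in\RV(\widetilde\delta)$, and this is literally $\widetilde p$, so $\widetilde p\in\RV(\widetilde\delta)$ is immediate. The parenthetical remark that $r\in\RV(\delta+\alpha)$ and $\tau'\in\SV$ imply this follows from Lemma~\ref{L:fg}: $\tau(t)\asymp t$ (a consequence of \eqref{tau}, \eqref{tau+}) gives $r^{1-\beta}(\tau(t))\asymp r^{1-\beta}(t)$, more precisely by Proposition~\ref{P:RV}-(iii) combined with $\tau\in\RV(1)$ (which holds when $\tau'\in\SV$, since then $t\tau'(t)/\tau(t)\to 1$ by Karamata, Proposition~\ref{T:karam}-(ii), applied to $\int \tau'$) one gets $r^{1-\beta}\circ\tau\in\RV((1-\beta)(\delta+\alpha))$, and multiplying by $\tau'\in\SV$ gives index $(1-\beta)(\delta+\alpha)=\delta(1-\beta)+\alpha(1-\beta)=\delta(1-\beta)-\beta=\widetilde\delta$ since $\alpha(1-\beta)=\alpha-\alpha\beta=-\beta$ (using $\alpha+\beta=\alpha\beta$). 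I would record this last arithmetic identity explicitly as it is used repeatedly.

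Third, the asymptotic relation \eqref{tildeLL}. Here $L_{\widetilde p}(t)=\widetilde p(t)/t^{\widetilde\delta}=\tau'(t)r^{1-\beta}(\tau(t))/t^{\widetilde\delta}$ and $L_{\widetilde r}(t)=\widetilde r(t)/t^{\widetilde\delta+\beta}=p^{1-\beta}(t)/t^{\widetilde\delta+\beta}$, so
$$
\frac{L_{\widetilde p}(t)}{L_{\widetilde r}(t)}=\tau'(t)\,\frac{r^{1-\beta}(\tau(t))}{p^{1-\beta}(t)}\,t^{\beta}.
$$
Now use \eqref{pr}: $r(t)=t^{\delta+\alpha}L_r(t)$ and $p(t)=t^\delta L_p(t)$, and by Lemma~\ref{L:fg}-(ii) (since $\tau(t)\asymp t$) $r^{1-\beta}(\tau(t))\asymp r^{1-\beta}(t)=t^{(1-\beta)(\delta+\alpha)}L_r^{1-\beta}(t)$. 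Substituting and collecting the powers of $t$: the exponent is $(1-\beta)(\delta+\alpha)-(1-\beta)\delta+\beta=(1-\beta)\alpha+\beta=-\beta+\beta=0$, so the $t$-powers cancel and one is left with $\tau'(t)L_r^{1-\beta}(t)/L_p^{1-\beta}(t)\asymp\tau'(t)(L_p(t)/L_r(t))^{\beta-1}$, which is \eqref{tildeLL}. I expect the main obstacle to be none of these individually — each is routine — but rather keeping the conjugacy arithmetic ($\alpha+\beta=\alpha\beta$, $(1-\beta)(\alpha-1)=1$, $\alpha(1-\beta)=-\beta$) straight through the three parts; I would state these once at the top of the proof and refer back. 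The only genuinely delicate point is the constant-$C$ homogeneity in the derivation of \eqref{re}, which I would check by plugging $C=1$ first and then noting the equation is homogeneous of the correct degree under $u\mapsto Cu$.
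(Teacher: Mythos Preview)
Your proposal is correct and follows essentially the same route as the paper: derive \eqref{re} by writing $u'=Cp\Phi(y\circ\tau)$, expressing $y\circ\tau$ via $u'$ and $y'$ via $u$, and applying the chain rule $(y\circ\tau)'=(y'\circ\tau)\tau'$; then read off the $\RV$ indices from Proposition~\ref{P:RV} and obtain \eqref{tildeLL} via Lemma~\ref{L:fg}. Your treatment is in fact more careful than the paper's (which is terse and silently takes $C=1$), particularly in tracking the conjugacy identities $\alpha(1-\beta)=-\beta$ and the cancellation of the constant $C$ through the multiplicativity of $\Phi^{-1}$.
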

	
	\begin{proof} Since $y$ is a solution of \eqref{e}, from the definition of $u$ we have $u'(t)=p(t)\Phi(y(\tau))$.
		Hence $y(\tau(t))=\Phi^{-1}(1/p(t))\Phi^{-1}(u(t))$. Further, $y'(t)=\Phi^{-1}(1/t(t))$ $\Phi^{-1}(u(t))$
		and $(y\circ\tau)'=(y'\circ\tau)\tau'$. Now it is
 easy to see that $u$ satisfies \eqref{re}. The fact that $\widetilde
		p\in\RV(\widetilde\delta)$ and $\widetilde
		r\in\RV(\widetilde\delta+\beta)$ follows from
		Proposition~\ref{P:RV}. In view of Lemma~\ref{L:fg}, we have
		$$
		\frac{L_{\widetilde p}(t)}{L_{\widetilde
				r}(t)}=\frac{t^\beta\tau'(t)r^{1-\beta}(\tau(t))}{p^{1-\beta}(t)}\asymp\left(
		\frac{t^\alpha p(t)}{r(t)}\right)^{\beta-1}\tau'(t)
		$$
		as $t\to\infty$, which implies \eqref{tildeLL}.
	\end{proof}

	\begin{lemma} \label{L:transf}
		Put $s=\varphi(t)$ and $x(s)=y(\varphi^{-1}(s))$, where $\varphi$ is a differentiable function such that $\varphi'(t)\ne 0$,
		equation \eqref{e} is transformed into the equation
		\begin{equation} \label{te}
		\frac{\drm}{\drm s}\left(\widehat r(s)\Phi\left(\frac{\drm x}{\drm s}\right)\right)=\widehat p(s)\Phi(x(\widehat \tau(s))),
		\end{equation}
		where
		$$
		\widehat p=\frac{p\circ\varphi^{-1}}{\varphi'\circ\varphi^{-1}},\ \
		\widehat r=(r\circ\varphi^{-1})\Phi(\varphi'\circ\varphi^{-1}),\ \
		\widehat\tau =\varphi\circ\tau\circ\varphi^{-1}.
		$$
		Further,
		\begin{equation} \label{x1y1}
		x^{[1]}(s):=\widehat r(s)\Phi\left(\frac{\drm x}{\drm s}(s)\right)=y^{[1]}(t).
		\end{equation}
		If \eqref{pr}, \eqref{rdiv}, and $\varphi=R_D$ hold, then $\widehat r=1$ and $\widehat p=(pr^{\beta-1})\circ R_D^{-1}\in\RV(-\alpha)$.
		If \eqref{pr}, \eqref{rconv}, and $\varphi=Q$ hold, then $\widehat r(s)=s^{2\alpha-2}$ and $\widehat p=(R_C^2pr^{\beta-1})\circ Q^{-1}\in\RV(\alpha-2)$.
	\end{lemma}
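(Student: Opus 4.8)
The plan is to obtain the transformed equation \eqref{te} by a plain change of variables (the chain rule), using nothing beyond the multiplicativity of $\Phi$, and then to substitute the two concrete choices $\varphi=R_D$ and $\varphi=Q$, reading off $\widehat r$, $\widehat p$ and their regular variation from the Karamata theory. First I would record the identities $\Phi(uv)=\Phi(u)\Phi(v)$ and $\Phi(1/u)=1/\Phi(u)$ for $u,v\ne 0$. Writing $t=\varphi^{-1}(s)$, differentiation of $x(s)=y(\varphi^{-1}(s))$ gives $\drm x/\drm s=y'(t)/\varphi'(t)$, hence
\[
\widehat r(s)\,\Phi\!\left(\frac{\drm x}{\drm s}\right)=(r\circ\varphi^{-1})(s)\,\Phi(\varphi'(t))\cdot\frac{\Phi(y'(t))}{\Phi(\varphi'(t))}=r(t)\Phi(y'(t))=y^{[1]}(t),
\]
which is \eqref{x1y1}. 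Differentiating $x^{[1]}(s)=y^{[1]}(\varphi^{-1}(s))$ once more in $s$, using $(y^{[1]})'(t)=p(t)\Phi(y(\tau(t)))$ from \eqref{e} and noting that $\widehat\tau(s)=\varphi(\tau(t))$ forces $x(\widehat\tau(s))=y(\tau(t))$, one gets
\[
\frac{\drm}{\drm s}\,x^{[1]}(s)=\frac{(y^{[1]})'(t)}{\varphi'(t)}=\frac{p(t)}{\varphi'(t)}\,\Phi(y(\tau(t)))=\widehat p(s)\,\Phi(x(\widehat\tau(s))),
\]
which is \eqref{te}; that $\widehat r\,\Phi(\drm x/\drm s)\in C^1$ is immediate since $y^{[1]}\in C^1$ and $\varphi$ is $C^1$ with $\varphi'\ne 0$.

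Next I would specialize $\varphi$. For $\varphi=R_D$ one has $\varphi'=r^{1-\beta}>0$, and since $(1-\beta)(\alpha-1)=-1$ and $r>0$, $\Phi(r^{1-\beta})=(r^{1-\beta})^{\alpha-1}=r^{-1}$; hence $\widehat r=(r\cdot r^{-1})\circ R_D^{-1}\equiv 1$ and $\widehat p=(p\,r^{\beta-1})\circ R_D^{-1}$. For $\varphi=Q=1/R_C$ one has $\varphi'=r^{1-\beta}/R_C^{2}>0$, so $\Phi(\varphi')=r^{-1}R_C^{-2(\alpha-1)}$, whence $\widehat r=R_C^{-2(\alpha-1)}\circ Q^{-1}$ and $\widehat p=(R_C^{2}p\,r^{\beta-1})\circ Q^{-1}$; using the relation $(R_C\circ Q^{-1})(s)=1/s$ noted above, $\widehat r(s)=s^{2\alpha-2}$.

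It then remains to identify the indices of $\widehat p$. Under \eqref{pr} one has $r^{1-\beta}\in\RV(\varrho-1)$ with $\varrho=(-1-\delta)/(\alpha-1)$, and $p\,r^{\beta-1}\in\RV(\beta(\delta+1))$, both by parts (ii) and (v) of Proposition~\ref{P:RV}. In the regime of \eqref{rdiv}, where $\delta<-1$ and hence $\varrho>0$, Proposition~\ref{T:karam}-(ii) gives $R_D\in\RV(\varrho)$, so that $R_D^{-1}\in\RV(1/\varrho)$, and Proposition~\ref{P:RV}-(iii) yields $\widehat p\in\RV(\beta(\delta+1)/\varrho)$, the exponent collapsing to $-\alpha$ through the conjugacy relation $\alpha+\beta=\alpha\beta$ (equivalently $(\alpha-1)(\beta-1)=1$). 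Symmetrically, in the regime of \eqref{rconv}, where $\delta>-1$ and $\varrho<0$, Proposition~\ref{T:karam}-(i) gives $R_C\in\RV(\varrho)$, so $Q=1/R_C\in\RV(-\varrho)$ and $Q^{-1}\in\RV(-1/\varrho)$; since $R_C^{2}p\,r^{\beta-1}\in\RV(2\varrho+\beta(\delta+1))$, Proposition~\ref{P:RV}-(iii) gives $\widehat p\in\RV\big((2\varrho+\beta(\delta+1))/(-\varrho)\big)=\RV(\alpha-2)$ after the same algebraic reduction.

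The bulk of this is routine; the two places that need genuine care are the sign bookkeeping inside $\Phi$ — one uses $r,p>0$ throughout, and $\varphi'>0$ in the two concrete cases — and the step passing from $R_D$ (resp.\ $Q$) to its inverse, which is legitimate only when $\varrho\ne 0$. It is precisely the dichotomy \eqref{rdiv} versus \eqref{rconv}, read together with \eqref{pr}, that pins down $\varrho>0$ or $\varrho<0$; the borderline $\delta=-1$ (where $\varrho=0$, so $R_D$ or $Q$ is merely slowly varying and its inverse need not be regularly varying) falls outside the present scope and is recovered only through the generalized Theorems~\ref{T:gen1} and \ref{T:gen2}, where the regular variation of the transformed coefficient is imposed directly as a hypothesis.
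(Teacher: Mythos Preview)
Your proof is correct and follows essentially the same approach as the paper's own proof, which merely says that the transformed equation comes from $x(\varphi(t))=y(t)$ together with $\tfrac{\drm}{\drm t}=\varphi'(t)\tfrac{\drm}{\drm s}$, and that the indices of regular variation are computed via Proposition~\ref{P:RV} and the Karamata theorem. You have simply filled in all the details the paper omits, including the explicit index computations and the useful caveat about the borderline $\delta=-1$.
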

	
	\begin{proof}
		The form of the transformed equation follows from the fact that
		$x(\varphi(t))$$=y(t)$ and $\frac{\drm}{\drm
			t}=\varphi'(t)\frac{\drm}{\drm s}$. The indices of regular
		variation of $\widetilde p$ in both cases can easily be computed
		via Proposition~\ref{P:RV} and the Karamata theorem (Proposition~\ref{T:karam}).
	\end{proof}
	
	

	\begin{proof}[Proof of Theorem~\ref{T:SV}]
		\textit{{(I)}} Take $y\in\IS$.
		Integrating \eqref{e}
		from $b$ to $t$, where $b$ is so large that $y(\tau(s))>0$ for
		$s\ge b$, we get
		\begin{equation} \label{intetb}
		\begin{aligned}
		r(t)\Phi(y'(t)) &= r(b)\Phi(y'(b))+\int_b^t
		p(s)\Phi(y(\tau(s)))\,\drm s\\
		&\ge r(b)\Phi(y'(b))+\Phi(y(\tau(b)))\int_b^t p(s)\,\drm
		s\to\infty
		\end{aligned}
		\end{equation}
		as $t\to\infty$, since $\int_a^\infty p(s)\,\drm s=\infty$. Thus $\lim_{t\to\infty}y^{[1]}(t)=\infty$. The
		equality in \eqref{intetb} and the divergence of the integral lead
		to the existence of $B>0$ such that
		$$
		\begin{aligned}
		r(t)\Phi(y'(t)) &\le B\int_b^t p(s)\Phi(y(\tau(s)))\,\drm s \le
		B\Phi(y(\tau(t)))\int_b^t p(s)\,\drm s\\ &\le B\Phi(y(t))\int_b^t
		p(s)\,\drm s
		\end{aligned}
		$$
		for large $t$. Hence,
		$$
		0<\left(\frac{ty'(t)}{y(t)}\right)^{\alpha-1}
		\le\frac{Bt^{\alpha-1}}{r(t)}\int_b^t p(s)\,\drm s,
		$$
		where the right-hand side tends to zero as $t\to\infty$ by our
		assumptions. Therefore, $ty'(t)/y(t)\to 0$ as
		$t\to\infty$, and so $y\in\NSV$.
		Note that conditions \eqref{tau}, \eqref{tau+} imply $\tau \asymp t$.  Hence, in view of
		Lemma~\ref{L:fg}, we have $y(t)\sim y(\tau(t))$ as $t\to\infty$. Moreover, if $p \in \RV(\delta)$ with $\delta>-1$, then  $p\Phi(y)\in\RV(\delta)$ by Proposition~\ref{P:RV}.
		Thus, recalling the equality in \eqref{intetb},  Proposition~\ref{T:karam} yields
		$$
		\begin{aligned}
		r(t)\Phi(y'(t)) &\sim\int_b^t p(s)\Phi(y(\tau(s)))\,\drm s\sim
		\int_b^t p(s)\Phi(y(s))\,\drm s\\
		&=\int_b^t s^\delta
		L_p(s)y^{\alpha-1}(s)\,\drm s\\
		&\sim\frac{1}{\delta+1}t^{\delta+1}L_p(t)y^{\alpha-1}(t)
		=\frac{1}{\delta+1}tp(t)y^{\alpha-1}(t)
		\end{aligned}
		$$
		as $t\to\infty$. This implies that $y^{[1]} \in \RV(\delta+1)$, $\delta+1>0$, therefore $\lim_{t \to \infty} y^{[1]}(t)=\infty$. Further, from the above asymptotic relation we have
		\begin{equation} \label{y'y}
		\frac{y'(t)}{y(t)}=(1+o(1))\Phi^{-1}\left(\frac{tp(t)}{(\delta+1)r(t)}\right)
		\end{equation}
		as $t\to\infty$.
		
If $\int_a^\infty G(s)\,\drm s=\infty$, then we
		integrate \eqref{y'y} from $b$ to $t$, to get
		\begin{equation} \label{lny}
		\ln y(t)=\ln
		y(b)+\int_b^t(1+o(1))\frac{G(s)}{\Phi^{-1}(\delta+1)}\,\drm s
		=\int_a^t(1+o(1))\frac{G(s)}{\Phi^{-1}(\delta+1)}\,\drm s,
		\end{equation}
		where the latter equality is true thanks to the divergence of
		$\int_a^\infty G(s)\,\drm s$. Indeed, it easily follows that for $\varepsilon(t)\to 0$ there are $\varepsilon_1(t), \varepsilon_2(t)\to0$ (as $t\to\infty$)
		such that
$$
c+\int_b^t(1+\varepsilon(s))G(s)\,\drm s=(1+\varepsilon_1(t))\int_a^t(1+\varepsilon(s))G(s)\,\drm s
=\int_a^t(1+\varepsilon_2(s))G(s)\,\drm s,
$$	where $c$ is constant.
		 Relation \eqref{lny} readily implies
		formula \eqref{f1}. Moreover, we have $\lim_{t\to\infty}y(t)=\infty$,
		and so $\IS\subseteq\IS_{\infty,\infty}$.

If $\int_a^\infty G(s)\,\drm
		s<\infty$, then we integrate \eqref{y'y} from $t$ to $\infty$, to
		get
		$$ 
		\ln N-\ln y(t)=\int_t^\infty(1+o(1))\frac{G(s)}{\Phi^{-1}(\delta+1)}\,\drm s,
		$$ 
		where $N=\lim_{t\to\infty}y(t)$, and formula \eqref{f2} follows.
		Clearly, $N$ has to be finite because of convergence of the
		integral, and so $\IS\subseteq\IS_{B,\infty}$.
		
Now we show
		that $\SV$ solutions cannot decrease (when $\delta>-1$), i.e., $\mS_{\SV}\subseteq\IS$. Take
		$y\in\DS$. Since $y^{[1]}$ is negative increasing, there exists
		$\lim_{t\to\infty}y^{[1]}(t)=K\in(-\infty,0]$. Suppose now that
		$y\in\SV$. Then, in view of Lemma~\ref{L:fg} and Proposition~\ref{P:RV},
		$py^{\alpha-1}\circ\tau\in\RV(\delta)$. Hence, $\int_b^t
		p(s)\Phi(y(\tau(s)))\,\drm s\to\infty$ as $t\to\infty$ since
		$\delta>-1$. This is however a contradiction, which can be seen from the equality in \eqref{intetb}. Since $\mathcal{S}=\IS\cup\DS$, we have proved that
		$\mS_{\SV}\subseteq\IS$. The other relations between the classes
		$\IS,\mS_{\SV},\mS_{\NSV}$, and $\IS_{x,\infty}$ with $x=\infty$
		or $x=B$ are clear.
		
\medskip \textit{(II)}		
If $p\in\RV(\delta)$ and $r\in\RV(\gamma)$, then, because of Proposition~\ref{T:karam},  condition \eqref{trintp} reads as
		$$
		\lim_{t\to\infty}\frac{t^{\alpha+\delta-\gamma}L_p(t)}{L_r(t)}=0,
		$$
		from which we necessarily obtain $\gamma\ge\alpha+\delta$ by Proposition~\ref{P:RV}-(vii).
		Let $\gamma=\alpha+\delta$ and $y\in\IS$. Then, in view of Lemma~\ref{L:fg} and Proposition~\ref{P:RV},
		$y\circ\tau\in\SV$, thus $(r\Phi(y'))'=p\Phi(y\circ\tau)\in\RV(\delta)$. Since $r(t)\Phi(y'(t))\sim\int_t^\infty p(s)y\Phi(\tau(s))\,\drm s$ as $t\to\infty$, we have $r\Phi(y')\in\RV(\delta+1)$ by Proposition~\ref{T:karam}, and so $\Phi(y')\in\RV(\delta+1-\delta-\alpha)=\RV(1-\alpha)$. Thus $y'\in\RV(-1)$ by Proposition~\ref{P:RV} and $y\in\Pi(ty'(t))$ by Proposition~\ref{P:Pi}.
Notice that condition $\gamma>\alpha+\delta$ implies the convergence of $\int_a^\infty G(s)\,\drm s$. Thus, if $\gamma>\alpha+\delta$ or $\gamma=\alpha+\delta$ and $\int_a^\infty G(s)\,\drm s<\infty$,  as we already know, $\lim_{t \to \infty}y(t)=N\in(0,\infty)$, and from \eqref{y'y},
		$$
		y'(t)\sim\frac{1}{\Phi^{-1}(\delta+1)}G(t)y(t)
		\sim\frac{N}{\Phi^{-1}(\delta+1)}G(t)\in\RV((\delta+1-\gamma)(\beta-1))
		$$
		as $t\to\infty$. Integrating this relation from $t$ to $\infty$ and using Proposition~\ref{T:karam}, we obtain \eqref{N-y}.
		If, in addition, $\gamma=\alpha+\delta$, then, in view of $y\in\Pi$, we get $N-y\in\SV$ by Proposition~\ref{P:Pi}; in our case this can easily be seen also from \eqref{N-y}. Formula \eqref{LLN} follows from Proposition~\ref{T:karam}-(iii) since $G(t)=\frac{1}{t}\left(\frac{L_p(t)}{L_r(t)}\right)^{\beta-1}$.
	\end{proof}
	
	\begin{proof}[Proof of Lemma~\ref{L:nec}]
		Take
		$y\in\IS\cap\NSV$. Set $w=r\Phi(y'/y)$. Then $w$ is eventually positive and satisfies
		\begin{equation} \label{ric}
		w'(t)-\frac{\Phi(y(\tau(t)))}{\Phi(y(t))}p(t)+(\alpha-1)r^{1-\beta}(t)w^\beta(t)=0
		\end{equation}
		for large $t$. We have
		$$0<\frac{t^{\alpha-1}}{r(t)}w(t)=\left(\frac{ty'(y)}{y(t)}\right)^{\alpha-1}\to 0$$
		as $t\to\infty$ since $y\in\NSV$.
		Further, $t^{\alpha-1}/r(t)=t^{\alpha-1-\gamma}/L_r(t)\in\RV(\alpha-1-\gamma)$. By our assumptions, $t^{\alpha-1}/r(t)\to 0$ as $t\to\infty$. Denote
		$$
		\Psi(t)=\frac{t^{\alpha-1}}{r(t)}\int_b^t r^{1-\beta}(s)w^\beta(s)\,\drm s,
		$$
		where $b\ge a$ is such that $y(t)>0$ and $y'(t)>0$ for $t\ge b$.
		If $\int_b^\infty r^{1-\beta}(s)w^\beta(s)\,\drm s<\infty$, then clearly $\Psi(t)\to 0$ as $t\to\infty$.
		Let $\int_b^\infty r^{1-\beta}(s)w^\beta(s)\,\drm s=\infty$.
		Without loss of
		generality we may assume $r\in \NRV(\gamma)\cap C^1$.
		Indeed, if $r$ is not normalized or is not in $C^1$, then we can
		take $\tilde r\in \NRV(\gamma)\cap C^1$ with $\tilde
		r(t)\sim r(t)$ as $ t \to \infty $, and we have
		$$ \Psi(t)\sim\frac{t^{\alpha-1}}{\tilde r(t)}\int_b^t \tilde r^{1-\beta}(s)(\tilde r(s)\Phi(y'(s)/y(s)))^\beta \drm s. $$
		The L'Hospital rule yields
		$$
		\begin{aligned}
		\lim_{t\to\infty}\Psi(t)		 &=\lim_{t\to\infty}\frac{r^{1-\beta}(t)w^\beta(t)}{r'(t)t^{1-\alpha}+(1-\alpha)r(t)t^{-\alpha}}\\
		 &=\lim_{t\to\infty}\frac{(ty'(t)/y(t))^\alpha}{tr'(t)/r(t)+(1-\alpha)}=\frac{0}{\gamma-\alpha+1}=0.
		\end{aligned}
		$$
		Integrating
		\eqref{ric} from $b$ to $t$ and multiplying by
		$t^{\alpha-1}/r(t)$, we obtain
		\begin{multline} \label{i-ric}
		\frac{t^{\alpha-1}}{r(t)}w(t)-\frac{t^{\alpha-1}}{r(t)}w(b)=\frac{t^{\alpha-1}}{r(t)}\int_b^t
		p(s)\Phi\left(\frac{y(\tau(s))}{y(s)}\right)\,\drm
		s+(\alpha-1)\Psi(t).
		\end{multline}
		In view of the previous observations, from \eqref{i-ric} we obtain
		\begin{equation} \label{anec1}
		\frac{t^{\alpha-1}}{r(t)}\int_b^t
		p(s)\Phi\left(\frac{y(\tau(s))}{y(s)}\right)\,\drm s\to 0
		\end{equation}
		as $t\to\infty$. Since $y\in\SV$ and $\tau(t)\asymp t$ as $t\to\infty$, by Lemma~\ref{L:fg}, there exists $K>0$ such that  $y(\tau(t))/y(t)\ge K$ for $t\ge b$, which, in view of \eqref{anec1}, implies \eqref{trintp}. Assuming in addition $p\in\RV(\delta)$, then \eqref{tLpLr} follows by
		Proposition~\ref{T:karam}.
	\end{proof}
	
	\begin{proof}[Proof of Theorem~\ref{T:RV}]
		For the coefficients $\widetilde p,\widetilde r$ defined by
		\eqref{re-coef} we have that $\widetilde
		p\in\RV(\widetilde\delta)$ and $\widetilde
		r\in\RV(\widetilde\delta+\beta)$, where
		$\widetilde\delta:=\delta(1-\beta)-\beta$, and
		$\lim_{t\to\infty}L_{\widetilde p}(t)/L_{\widetilde r}(t)=0$
		thanks to Lemma~\ref{L:rec} and \eqref{tau-add}.
		
		Take $y\in\IS$ and let $u=r\Phi(y')$. Then
		$u\in\IS^{\eqref{re}}$, recalling the notation \eqref{hvezda}. We have $\widetilde\delta>-1$, and thus we
		can apply Theorem~\ref{T:SV} to equation \eqref{re} to obtain
		$u\in\NSV$ and $u\in\Pi(tu')$. Hence,
		$y'\in\RV((-\delta-\alpha)(\beta-1))$
		by Proposition~\ref{P:RV}.
		Since $\delta<-1$, we have $(-\delta-\alpha)(\beta-1)>-1$, and thus
		$\int_b^\infty y'(s)\,\drm s=\infty$. Consequently, $y(t)\sim y(t)-y(b)=\int_b^ty'(s)\,\drm s\in\NRV(\varrho)$ as $t\to\infty$, where $\varrho=(-\delta-1)/(\alpha-1)=(-\delta-\alpha)(\beta-1)+1$.
		Moreover,
		$y^{[1]}\in\Pi(tu'(t))=\Pi(tp(t)\Phi(y(\tau(t))))$ and
		$y\in\IS_\infty$ since the index $\varrho$ of regular variation is
		positive. We have shown that $\IS\subset\NRV(\varrho)$. Next we
		derive asymptotic formulae by applying again Theorem~\ref{T:SV}.

If		$\int^\infty(s\widetilde p(s)\widetilde r(s))^{\alpha-1}\drm
		s=\infty$, then
		\begin{equation} \label{uf1}
		u(t)=\exp\left\{\int_a^t(1+o(1))\Phi\left(\frac{s\widetilde
			p(s)}{(\widetilde\delta+1)\widetilde r(s)}\right)\,\drm s\right\}
		\end{equation}
		as $t\to\infty$ and $u\in\IS_\infty^{\eqref{re}}$.

If
		$\int^\infty(s\widetilde p(s)\widetilde r(s))^{\alpha-1}\drm
		s<\infty$, then
		\begin{equation} \label{uf2}
		u(t)=M\exp\left\{-\int_t^\infty(1+o(1))\Phi\left(\frac{s\widetilde
			p(s)}{(\widetilde\delta+1)\widetilde r(s)}\right)\,\drm s\right\}
		\end{equation}
		as $t\to\infty$ and $u\in\IS_B^{\eqref{re}}$, where
		$M=\lim_{t\to\infty}u(t)\in(0,\infty)$.

Since
		$$
		\left(\frac{t\widetilde p(t)}{\widetilde
			r(t)}\right)^{\alpha-1}=H_\tau(t)\ \ \text{and}\ \
		\widetilde\delta+1=\varrho,
		$$
		from \eqref{uf1} and $y'(t)\sim\varrho y(t)/t$, under the assumption $\int_a^\infty H_\tau=\infty$, we get
		\begin{equation} \label{yf1}
		 y(t)=(1+o(1))\frac{1}{\varrho}tr^{1-\beta}(t)\exp\left\{\int_a^t(1+o(1))\frac{\beta-1}{\varrho^{\alpha-1}}
		H_\tau(s)\,\drm s\right\}
		\end{equation}
		as $t\to\infty$. Because of divergence of $\int_a^\infty H_\tau$,
		the expression $(1+o(1))/{\varrho}$ can be included into
		the $(1+o(1))$ term inside the integral in \eqref{yf1} (similarly as in \eqref{lny}), and thus
		we obtain formula \eqref{f11}. Since $y\in\IS_\infty$ and
		$u\in\IS_\infty^{\eqref{re}}$, we have $y\in\IS_{\infty,\infty}$.
		
Now assume that $\int_a^\infty H_\tau<\infty$. Then \eqref{uf2}
		implies
		$$
		 y'(t)=r^{1-\beta}(t)M^{\beta-1}\exp\left\{-\int_t^\infty(1+o(1))\frac{\beta-1}{\varrho^{\alpha-1}}H_\tau(s)\,\drm s\right\}
		$$
		as $t\to\infty$. Integrating from $b$ to $t$, replacing the lower
		limit in the integral by $a$ and $y(b)$ by a suitable $A$, we
		obtain formula \eqref{f21}. Moreover $y\in\IS_{\infty,B}$, in view
		of $y\in\IS_\infty$ and $u\in\IS_B^{\eqref{re}}$. From
		Theorem~\ref{T:SV} we know that
		$$
		|M-u|\in\SV\ \ \text{and}\ \ \frac{L_{\widetilde p}^{\alpha-1}(t)}{L_{\widetilde r}^{\alpha-1}(M-u(t))}=o(1)
		$$
		as $t\to\infty$. Noting that, by Lemma~\ref{L:fg},
		$$
		\frac{L_{\widetilde p}^{\alpha-1}(t)}{L_{\widetilde r}^{\alpha-1}}\sim\left(\frac{t}{\tau(t)}\right)^{\delta+\alpha}
		(\tau'(t))^{\alpha-1}\frac{L_p(t)}{L_r(t)}
		$$
		as $t\to\infty$, we get $|M-y^{[1]}|\in\SV$ and \eqref{LLM} easily follow.
		The equalities among the subclasses follow from the relations		 $\IS\subseteq\mS_{\NRV}(\varrho)\subseteq\mS_{\RV}(\varrho)\subseteq\IS\subseteq\IS_{\infty,x}\subseteq\IS$,
		$x=B$ or $x=\infty$ according to whether $\int_a^\infty H_\tau$ converges or diverges, respectively; note a regularly varying solution of
		\eqref{e} with a positive index is necessarily increasing.
	\end{proof}

	\begin{proof}[Proof of Theorem~\ref{T:gen1}]
		First note that, in view of
		Lemma~\ref{L:tau}-(ii), \eqref{tau} implies \eqref{tau1} with
		$i=D$. Take $y\in\IS$. Then $x(s)=y(t)$, with $s=R_D(t)$,
		satisfies equation \eqref{te}, where $\widehat r=1$, $\widehat
		p=p_D$, $\widehat\tau=\tau_D$, and $x\in\IS^{\eqref{te}}$, see
		Lemma~\ref{L:transf}. Note that the interval $[a,\infty)$ is
		transformed into $[b,\infty)$ for some $b$. Since $\widehat
		r\in\SV$ and $\widehat p\in\RV(-\alpha)$, we can apply
		Theorem~\ref{T:RV} on equation \eqref{te}; in fact, here
		$\delta=-\alpha<-1$. We get $x\in\NRV(1)$ since $\delta=-\alpha$ implies $\varrho=1$.
		Moreover,
		\begin{equation} \label{u11}
		x(s)=s\exp\left\{\int_b^s(1+o(1))(\beta-1)(u\widehat\tau'(u))^{\alpha-1}\widehat
		p(u)\,\drm u\right\}
		\end{equation}
		as $s\to\infty$, with $x\in\IS_{\infty,\infty}^{\eqref{te}}$
		provided $\int_b^\infty (u\widehat\tau'(u))^{\alpha-1}\widehat
		p(u)\,\drm u=\infty$, while
		\begin{equation} \label{u22}
		x(s)=B+\int_b^s
		M^{\beta-1}\exp\left\{-\int_\xi^\infty(1+o(1))(\beta-1)(u\widehat\tau'(u))^{\alpha-1}\widehat
		p(u)\,\drm u\right\}\,\drm\xi
		\end{equation}
		as $s\to\infty$, for some $B\in\R$, with
		$x\in\IS_{\infty,B}^{\eqref{te}}$ provided $\int_b^\infty
		(u\widehat\tau'(u))^{\alpha-1}\widehat p(u)\,\drm u=\infty$. Since
		$\widehat p=p_D$ and $\widehat\tau=\tau_D$, substitutions
		$v=R^{-1}_D(u)$ and $\eta=R^{-1}_D(\xi)$ in the integrals \eqref{u11} and \eqref{u22} yield formulae
		\eqref{tF11} and \eqref{tF22}. The fact that $y\in\IS_{\infty,i}$,
		$i\in\{B,\infty\}$, follows from $x\in\IS_{\infty,i}^{\eqref{te}}$
		and \eqref{x1y1}, recalling that $x(R^{-1}_D(s))=y(t)$, $\widehat
		p=p_D$, and $\widehat \tau=\tau_D$. From $|M-x^{[1]}|\in\SV$ we obtain $|M-y^{[1]}\circ R_D^{-1}|\in\SV$. In view of Lemma~\ref{L:tau} and condition \eqref{tau2}, we have
		$\tau_D(s)\asymp s$ as $s\to\infty$. Hence, expressing relation \eqref{LLM}
		in terms of our setting, we obtain
		$$
		\frac{(\widehat\tau'(s))^{\alpha-1}L_{\widehat
				p}(s)}{M-x^{[1]}(s)}=o(1)
		$$
		as $s\to\infty$. Recalling that $L_{\widehat{p}}(s)=s^\alpha\widehat p (s)=R_D^\alpha(t)p(t)r^{\beta-1}(t)$, we get \eqref{trNo2}.
	\end{proof}
	
	\begin{proof}[Proof of Theorem~\ref{T:gen2}]
		The proof
		is similar to that of Theorem~\ref{T:gen1}. Here, for $y\in\mS$,
		we make the substitution $x(s)=y(t)$ with $s=Q(t)$. In view of
		Lemma~\ref{L:transf}, $x$ satisfies equation \eqref{te}, where
		$\widehat r=s^{2\alpha-2}\in\RV(2\alpha-2)$, $\widehat
		p=p_C\in\RV(\alpha-2)$, and $\widehat\tau=\tau_C$. From
		Lemma~\ref{L:tau}-(ii), conditions \eqref{tau1} with $i=C$ holds. Since, in
		fact, $\delta=\alpha-2>-1$, we can apply Theorem~\ref{T:SV} to equation \eqref{te} where
		we consider $x\in\IS^{\eqref{te}}$ (i.e., $y\in\IS$). The
		details are left to the reader.
	\end{proof}



\begin{thebibliography}{99}

\bibitem{bgt}
N.\,H. Bingham, C.\,M. Goldie, J.\,L. Teugels, {\em Regular Variation},
Encyclopedia of Mathematics and its Applications, Vol. 27,
Cambridge University Press, 1987.

\bibitem{book}
O. Do\v sl\'y, P. \v Reh\'ak, {\em Half-linear Differential Equations},
Elsevier, North Holland, 2005.

\bibitem{geluk1990}
J.\,L. Geluk, {\em On slowly varying solutions of the linear second
order differential equation}, in: Proceedings of the Third Annual
Meeting of the International Workshop in Analysis and its
Applications. Publ. Inst. Math. (Beograd) (N.S.) 48 (1990),
52--60.	

\bibitem{geluk}
J.\,L. Geluk, L. de Haan, {\em Regular Variation, Extensions and
Tauberian Theorems}, CWI Tract 40, Amsterdam, 1987. 	

\bibitem{grimm-hall}
L.\,J. Grimm, L.\,M. Hall, {\em Self adjoint differential equations and
Karamata functions}, in: Proceedings of the Third International
Colloquium on Differential Equations (Plovdiv, 1992), 77--87, VSP,
Utrecht, 1993.

\bibitem{dehaan}
L. de Haan, {\em On Regular Variation and its Applications to the Weak
Convergence of Sample Extremes}, Mathematisch Centrum Amsterdam,
1970.

\bibitem{jaros-kusano}
J. Jaro\v s, T. Kusano, {\em Self-adjoint differential equations and
generalized Karamata functions}, Bull. Cl. Sci. Math. Nat. Sci.
Math. \textbf{29} (2004), 25--60.

\bibitem{jar-kus-tan2003}
J. Jaro\v s, T. Kusano, T. Tanigawa, {\em Nonoscillation theory for
second order half-linear differential equations in the framework
of regular variation}, Results Math. \textbf{43} (2003), 129--149.	

\bibitem{kusano-maric2006}
T. Kusano, V. Mari\'c, {\em On a class of functional differential equations having slowly varying solutions}, Publ. Inst. Math. (Beograd) (N.S.) \textbf{80} (2006), 207--217.

\bibitem{kusano-maric2007georg}
T. Kusano, V. Mari\'c, {\em Slowly varying solutions of functional differential equations with retarded and advanced arguments}, Georgian Math. J. \textbf{14} (2007), 301--314.

\bibitem{kusano-maric2007bull}	
T. Kusano, V. Mari\'c, {\em Regularly varying solutions to functional differential equations with deviating argument}, Bull. Cl. Sci. Math. Nat. Sci. Math. \textbf{32} (2007), 105--128. 	

\bibitem{kusano-lalli}	
T. Kusano, B.\,S. Lalli,
{\em On oscillation of half-linear functional-differential equations with deviating arguments},
Hiroshima Math. J. \textbf{24} (1994), 549--563.

\bibitem{manojl-tanig}	
J. Manojlovi\'c, T. Tanigawa, {\em Regularly varying solutions of half-linear differential equations with retarded and advanced arguments}, Math. Slovaca \textbf{65} (2015), 1361--1402.	

\bibitem{maric}
V. Mari\'c, {\em Regular Variation and Differential Equations}, Lecture
Notes in Mathematics 1726, Springer-Verlag, Berlin-Heidelberg-New
York, 2000.

\bibitem{n:90}
F.~Neuman, {\em Transformation and canonical forms of functional-differential
equations}, Proc. Roy. Soc. Edinburgh Sect. A \textbf{115} (1990), 349--357.

\bibitem{pituk}
M. Pituk,
{\em Oscillation of a linear delay differential equation with slowly
varying coefficient}, Appl. Math. Letters \textbf{73} (2017), 29--36.

\bibitem{pr-methods}
P. \v Reh\'ak, {\em On some methods in half-linear asymptotic theory},
Electron. J. Differential Equations, \textbf{2016} (2016), No. 267, 1--27.

\bibitem{pr-asymp}
P. \v Reh\'ak, {\em Asymptotic formulae for solutions of half-linear
differential equations}, Appl. Math. Comp \textbf{292} (2017), 165--177.

\bibitem{rt}
P. \v Reh\'ak, V. Taddei, {\em Solutions of half-linear differential
equations in the classes Gamma and Pi}, Differen. Integral Equat. \textbf{29} (2016), 683--714.

\bibitem{tanigawa2008}
T. Tanigawa, {\em Regularly varying solutions of half-linear functional differential equations with retarded arguments}, Acta Math. Hungar. \textbf{120} (2008), 53--78.

\bibitem{tanigawa2012}
T. Tanigawa, {\em Generalized regularly varying solutions of second order nonlinear differential equations with deviating arguments}, Mem. Differ. Equ. Math. Phys. \textbf{57} (2012), 123--162.

\end{thebibliography}
\end{document}